\def\l@subsection{\@tocline{2}{0pt}{2.5pc}{2.5pc}{}}%iets opgeschoven worden naar rechts
\def\chapter{\clearpage\thispagestyle{plain}\global\@topnum\z@ %beschikbare pagina beginnen
\@afterindenttrue \secdef\@chapter\@schapter}
\newtheorem{thm} {Theorem} [section]
\newtheorem{prop}{Proposition} [section]
\newtheorem{lem} {Lemma} [section]
\newtheorem{cornn}{Corollary}
\theoremstyle{definition}
\newtheorem{rem} {Remark} [section]
\newtheorem{rems} [rem]{Remarks}
\newtheorem{exa} [rem] {Example}
\newtheorem{remnn}{Remark}
\newcommand{\mf}{\mathfrak}
\newcommand{\mc}{\mathcal}
\newcommand{\mb}{\mathbb}
\newcommand{\nts}{\negthinspace}     %handig
\newcommand{\Nts}{\nts\nts}
\newcommand{\ot}{\otimes}           %vectorruimten en modules
\newcommand{\la}{\langle}
\newcommand{\ra}{\rangle}
\newcommand{\Hom}{{\rm Hom}}        %Algebra algemeen
\newcommand{\ind}{{\rm ind}}
\newcommand{\Sym}{{\rm Sym}} %symmetrische groep
\newcommand{\id}{{\rm id}}
\let\ttie\t
\newcommand{\tie}[1]{{\let\t\ttie \ttie#1}}%\t requires a special treatment, because
\renewcommand{\t}{\mf{t}}  %it is defined recursively. This trick is due to Uwe L\"uck
\newcommand{\GL}{{\rm GL}}
\newcommand{\SL}{{\rm SL}}
\newcommand{\Sp}{{\rm Sp}} %symplectische groep
\newcommand{\ve}{\varepsilon}
\def\vcdots{\vbox{\baselineskip4\p@ \lineskiplimit\z@
\kern3\p@\hbox{.}\hbox{.}\hbox{.}\Nts\nts\kern3\p@}}
\begin{document}

\title{A combinatorial translation principle and diagram combinatorics for the symplectic group}

\begin{abstract}
Let $k$ be an algebraically closed field of characteristic $p>2$. We compute the Weyl filtration multiplicities in indecomposable tilting modules and the decomposition numbers for the symplectic group over $k$ in terms of cap-curl diagrams under the assumption that $p$ is bigger than the greatest hook length in the largest partition involved.
As a corollary we obtain the decomposition numbers for the Brauer algebra under the same assumptions. Our work combines ideas from work of Cox and De Visscher and work of Shalile with techniques from the representation theory of reductive groups.
%{\it Keywords}: Decomposition numbers, symplectic group, Brauer algebra, cap-curl diagrams.
\end{abstract}
\author[H.\ Li]{Henry Li}
\address%[H.~Li]
{School of Mathematics,
University of Leeds,
LS2 9JT, Leeds, UK}
\email{mmzli@leeds.ac.uk}

\author[R.\ Tange]{Rudolf Tange}
\address%[R.~Tange]
{School of Mathematics,
University of Leeds,
LS2 9JT, Leeds, UK}
\email{R.H.Tange@leeds.ac.uk}

\maketitle
\markright{\MakeUppercase{A combinatorial translation principle for} $\Sp_n$}

\section{Introduction}
The present paper concerns the symplectic group and the Brauer algebra. In the companion paper \cite{T} the analogous results for the general linear group and the walled Brauer algebra are obtained.

Let $\Sp_n$, $n=2m$, be the symplectic group over an algebraically closed field $k$ of characteristic $p>2$, and let $V$ be the natural module. Since Williamson \cite{Wi} disproved Lusztig's conjecture for $\SL_n$ and $p$ bigger than any linear bound in $n$, it has become more interesting to determine decomposition numbers of reductive groups for special sets of weights.
In the present paper we do this for $\Sp_n$ and dominant weights for which $p$ is bigger than the greatest hook length.

The Brauer algebra is a cellular algebra and an interesting problem is to determine its decomposition numbers. In characteristic $0$ this was first done in \cite{Mar} and in \cite{CdV} an alternative proof was given which included the analogous result for the walled Brauer algebra. In \cite{Sh} the decomposition numbers of the Brauer algebra were determined in characteristic $p>r$. All these results are in terms of certain cap (or cap-curl) diagrams.

In characteristic $0$ there is a well-known relation between certain representations of $\Sp_n$ and the representations of the Brauer algebra $B_r(-n)$, given by the double centraliser theorem for  their actions on $V^{\ot r}$. In characteristic $p$ such a connection doesn't follow from the double centraliser theorem and requires more work. This was done in \cite{DT} by means of the symplectic Schur functor.

In the present paper we determine the Weyl filtration multiplicities in the indecomposable tilting modules $T(\lambda)$ and the decomposition numbers for the induced modules $\nabla(\lambda)$ of $\Sp_n$ when $\lambda$ has greatest hook length less than $p$. Using the symplectic Schur functor we then obtain from the first multiplicities the decomposition numbers of the Brauer algebra under the assumption that $p$ is bigger than the greatest hook length in the largest partition involved. Since we use the transposed labels, our description of the decomposition numbers is considerably simpler than that of \cite{Sh}. Our main tools are the ``reduced" Jantzen Sum Formula, truncation, and refined translation functors.

Our approach is mainly based on \cite{CdV} and \cite{Sh}, but the combinatorial ideas go back, via work of Brundan-Stroppel, see e.g. \cite{BS}, to work of Boe \cite{Boe} and Lascoux-Sch\"utzenberger \cite{LS}. In the latter two papers the combinatorial structures are bracket expressions and trees, rather than cap diagrams. Our translation functors are modifications of those in \cite[II.7]{Jan}. The idea of translation functors is based on the linkage principle and goes back to category $\mc O$, see \cite[Ch~7]{Hum}.
%\cite{CdV}: Sections~\ref{s.translation} and \ref{s.filt_mult}
%\cite{Sh}: Section~\ref{s.arrow_diagrams}

The paper is organised as follows. In Section~\ref{s.prelim} we introduce the necessary notation and state two results about quasihereditary algebras. In Section~\ref{s.JSF} we show that certain terms in the Jantzen Sum Formula may be omitted. This leads to a ``strong linkage principle" in terms of a partial order $\preceq$, and the existence of nonzero homomorphisms between certain pairs of induced modules, see Proposition~\ref{prop.linkage}. In Section~\ref{s.translation} we prove the two basic results about translation that we will use: Propositions~\ref{prop.trans_equivalence} and ~\ref{prop.trans_projection}. For this we use refined translation functors defined on certain truncations of the category of $\Sp_n$-modules. In Section~\ref{s.arrow_diagrams} we introduce arrow diagrams to represent the weights that satisfy our condition, and we show that the nonzero terms in the reduced Jantzen Sum Formula and the pairs of weights for which we proved the existence of nonzero homomorphisms between the induced modules have a simple description in terms of arrow diagrams, see Lemma~\ref{lem.JSF-arrows}. The order $\preceq$ and conjugacy under the dot action also have a simple description in terms of the arrow diagram, see Remark~\ref{rems.preceq}.1. In Section~\ref{s.filt_mult} we prove our first main result, Theorem~\ref{thm.filt_mult}, which describes the Weyl filtration multiplicities in certain indecomposable tilting modules in terms of cap-curl diagrams. As a corollary we obtain the decomposition numbers of the Brauer algebra under the assumption that $p$ is bigger than the greatest hook length in the largest partition involved. In Section~\ref{s.dec_num} we prove our second main result, Theorem~\ref{thm.dec_num}, which describes the decomposition numbers for certain induced modules in terms of cap-curl codiagrams.

\section{Preliminaries}\label{s.prelim}
Throughout this paper $G$ is a reductive group over an algebraically closed field $k$ of characteristic $p>2$, $T$ is a maximal torus of $G$ and $B^+$ is a Borel subgroup of $G$ containing $T$.
We denote the group of weights relative to $T$, i.e. the group of characters of $T$, by $X$. For $\lambda,\mu\in X$ we write $\mu\le\lambda$ if $\lambda-\mu$ is a sum of positive roots (relative to $B^+$). The Weyl group of $G$ relative to $T$ is denoted by $W$ and the set of dominant weights relative to $B^+$ is denoted by $X^+$. In the category of (rational) $G$-modules, i.e. $k[G]$-comodules, there are several special families of modules. For $\lambda\in X^+$ we have the irreducible $L(\lambda)$ of highest weight $\lambda$, and the induced module $\nabla(\lambda)=\ind_B^Gk_\lambda$, where $B$ is the opposite Borel subgroup to $B^+$ and $k_\lambda$ is the 1-dimensional $B$-module afforded by $\lambda$. The Weyl module and indecomposable tilting module associated to $\lambda$ are denoted by $\Delta(\lambda)$ and $T(\lambda)$. To each $G$-module $M$ we can associate its formal character ${\rm ch}\,M=\sum_{\lambda\in X}\dim M_\lambda e(\lambda)\in(\mb Z X)^W$, where $M_\lambda$ is the weight space associated to $\lambda$ and $e(\lambda)$ is the basis element corresponding to $\lambda$ of the group algebra $\mb Z X$ of $X$ over $\mb Z$. Composition and good or Weyl filtration multiplicities are denoted by $[M:L(\lambda)]$ and $(M:\nabla(\lambda))$ or $(M:\Delta(\lambda))$. For a weight $\lambda$, the character $\chi(\lambda)$ is given by Weyl's character formula \cite[II.5.10]{Jan}. If $\lambda$ is dominant, then ${\rm ch}\,\nabla(\lambda)={\rm ch}\,\Delta(\lambda)=\chi(\lambda)$. The $\chi(\lambda)$, $\lambda\in X^+$, form a $\mb Z$-basis of $(\mb Z X)^W$.
For $\alpha$ a root and $l\in\mb Z$, let $s_{\alpha,l}$ be the affine reflection of $\mb R\ot_{\mb Z}X$ defined by $s_{\alpha,l}(x)=x-a\alpha$, where $a=\la x,\alpha^\vee\ra-lp$. Mostly we replace $\la-,-\ra$ by a $W$-invariant inner product and then the cocharacter group of $T$ is identified with a lattice in $\mb R\otimes_{\mb Z}X$ and $\alpha^\vee=\frac{2}{\la\alpha,\alpha\ra}\alpha$. We have $s_{-\alpha,l}=s_{\alpha,-l}$ and the affine Weyl group $W_p$ is generated by the $s_{\alpha,l}$.
We denote the half sum of the positive roots by $\rho$ and define the dot action of $W_p$ on $\mb R\ot_{\mb Z}X$ by $w\cdot x=w(\lambda+\rho)-\rho$. The lattice $X$ is stable under the dot action. The \emph{linkage principle} \cite[II.6.17,7.2]{Jan} says that if $L(\lambda)$ and $L(\mu)$ belong to the same $G$-block, then $\lambda$ and $\mu$ are $W_p$-conjugate under the dot action. We refer to \cite{Jan} part II for more details.

Unless stated otherwise, $G$ will be the symplectic group $\Sp_n$, $n=2m$, given by $\Sp_n=\{A\in\GL_n\,|\,A^TJA=J\}$, where $J=\begin{bmatrix}0&I_m\\-I_m&0\end{bmatrix}$ and $A^T$ is the transpose of $A$. Note that, since $J^2=-I_n$, $A\in\Sp_n$ implies that $A^TJ=(-AJ)^{-1}$ and therefore also $AJA^T=J$. The natural $G$-module $k^n$ is denoted by $V$. We let $T$ be the group of diagonal matrices in $\Sp_n$, i.e. the matrices $diag(d_1,\ldots,d_n)$ with $d_id_{i+m}=1$ for all $i\in\{1,\ldots,m\}$. Now $X$ is naturally identified with $\mb Z^m$ such that the $i$-th diagonal coordinate function corresponds to the $i$-th standard basis element $\ve_i$ of $\mb Z^m$. We let $B^+$ be the Borel subgroup corresponding to the set of positive roots $\ve_i\pm\ve_j$, $1\le i<j\le m$, $2\ve_i$, $1\le i\le m$.
We can now identify the dominant weights with $m$-tuples $(\lambda_1,\ldots,\lambda_m)$ with $\lambda_1\ge\lambda_2\ge\cdots\ge\lambda_m\ge0$, or with partitions $\lambda$ with $l(\lambda)\le m$, where $l(\lambda)$ denotes the length of a partition. We will also identify them with the corresponding Young diagrams. Partitions with parts $<10$ may be written in ``exponential form": $(5,5,4,3,2)$ is denoted by $(5^2432)$, where we sometimes omit the brackets. We denote the subgroup of $W_p$ generated by the $s_{\alpha,l}$, $\alpha=\ve_i\pm\ve_j$, $1\le i<j\le s$ or $\alpha=2\ve_i$, $1\le i\le s$ by $W_p(C_s)$ and we denote the subgroup of $W_p$ generated by the $s_{\alpha,l}$, $\alpha=\ve_i\pm\ve_j$, $1\le i<j\le s$ by $W_p(D_s)$. The group $W$ acts on $\mb Z^m$ by permutations and sign changes, and $W_p\cong W\ltimes pX_{ev}$, where $X_{ev}=\{\lambda\in\mb Z^m\,|\,|\lambda|\text{ even}\}$ is the type $C_m$ root lattice and $|\lambda|=\sum_{i=1}^m\lambda_i$. Note that $W_p(C_s)\cong W(C_s)\ltimes pX_{ev}(C_s)$ and $W_p(D_s)\cong W(D_s)\ltimes pX_{ev}(C_s)$, where $X_{ev}(C_s)$ consists of the vectors in $X_{ev}$ which are $0$ at the positions $>s$, $W(C_s)$ is generated by the $s_\alpha=s_{\alpha,0}$, $\alpha=\ve_i\pm\ve_j$, $1\le i<j\le s$ or $\alpha=2\ve_i$, $1\le i\le s$, and $W(D_s)$ is generated by the $s_\alpha$, $\alpha=\ve_i\pm\ve_j$, $1\le i<j\le s$. The group $W(D_s)$ acts by permutations and an even number of sign changes. We have $$\rho=(m,m-1,\ldots,1)\,.$$
It is easy to see that if $\lambda,\mu\in X$ are $W_p$-conjugate and equal at the positions $>s$, then they are $W_p(C_s)$-conjugate. The same applies for the dot action.
%The statement about the dot action follows from the one about the ordinary action. We know that $w(\lambda)-\mu\in pX_{ev}\subseteq pX$ for some $w\in W$. If we reduce each of the coordinates of $\lambda$ and $\mu$ mod $p$ to a value $a$ with $|a|\le(p-1)/2$, then the resulting tuples $\ov\lambda$ and $\ov\mu$ must be $W$-conjugate. After changing each negative value $a$ to $-a$, the tuples must be $\Sym_m$ conjugate. Since they are equal from position $s$ on, they are $\Sym_s$ conjugate. But then the tuples with the sign changes after position $s$ undone must also be $\Sym_s$-conjugate, since these changes were the same. It follows that, for some $w_1\in W(C_s)$, $w_1(\lambda)-\mu\in pX(C_s)\cap X(C_s)_{ev}=pX(C_s)_{ev}$, since $p>2$. This means that $\lambda$ and $\mu$ are $W_p(C_s)$-conjugate. % See also \cite[Prop.~5.3]{CdVM2} for a characterisation of $W_p(D_s)$-conjugacy under the dot action.

To obtain our results we will have to make use of quasihereditary algebras. We refer to \cite[Appendix]{D} and \cite[Ch~A]{Jan} for the general theory.
For a subset $\Lambda$ of $X^+$ and a $G$-module $M$ we say that $M$ \emph{belongs to $\Lambda$} if all composition factors have highest weight in $\Lambda$ and we denote by $O_\Lambda(M)$ the largest submodule of $M$ which belongs to $\Lambda$. For a quasihereditary algebra one can make completely analogous definitions. We denote the category of $G$-modules which belong to $\Lambda$ by $\mc C_\Lambda$.
The following result is part of the folklore.
\begin{lem}\label{lem.quasihereditary}
Let $A$ be a quasihereditary algebra with partially ordered labelling set $(\Lambda,\le)$ for the irreducibles. Let $\sqsubseteq$ be a partial order on $\Lambda$ such that $[\Delta(\lambda):L(\mu)]\ne0$ or $[\nabla(\lambda):L(\mu)]\ne0$ implies $\mu\sqsubseteq\lambda$. Then $A$ is also quasihereditary for $\sqsubseteq$ and the (co)standard modules (and therefore also the tilting modules) are the same as for $\le$.
\end{lem}
Later on we will use this result as follows. Recall that a subset $\Lambda'$ of a partially ordered set $(\Lambda,\le)$ is called \emph{saturated} if $\lambda\in\Lambda'$ and $\mu\le\lambda$ implies $\mu\in\Lambda'$ for all $\lambda,\mu\in\Lambda$.
Now we first we choose a certain finite saturated subset $\Lambda_0$ of $(X^+,\le)$ and form the quasihereditary algebra $O_{\Lambda_0}(k[G])^*$, where $k[G]$ is considered as $G$-module via the right multiplication action of $G$ on itself. Then we replace the partial ordering by a suitable ordering $\sqsubseteq$ and truncate the algebra to a smaller $\sqsubseteq$-saturated set $\Lambda$. For the resulting quasihereditary algebra $A$ the irreducible, standard/costandard and tilting modules are the irreducible, Weyl/induced and tilting modules for $G$ with the same label. So we will always have that $[\Delta(\lambda):L(\mu)]=[\nabla(\lambda):L(\mu)]$. Furthermore, we have for $\lambda\in\Lambda$ and the $G$-injective hull $I(\lambda)$ of $L(\lambda)$ that $I_\Lambda(\lambda)\stackrel{\text{def}}{=}O_\Lambda(I(\lambda))$ is the $A$-injective hull of $L(\lambda)$. Note that $\mc C_\Lambda$ is the category of $A$-modules.

We will need one more general result about quasihereditary algebras.
\begin{lem}\label{lem.quasihereditary2}
Let $A$ be a quasihereditary algebra with partially ordered labelling set $(\Lambda,\le)$ for the irreducibles. Let $\lambda\in\Lambda$ and assume $\mu\in\Lambda$ is maximal with $\mu<\lambda$. Then $\dim\Hom_A(\nabla(\lambda),\nabla(\mu))=[\nabla(\lambda):L(\mu)]$.
\end{lem}
\begin{proof}
Using \cite[Prop~A.2.2(i)]{D} and the argument in the proof of \cite[II.6.24]{Jan} we get $\Hom_A(\nabla(\lambda),\nabla(\mu))=\Hom_A(\nabla(\lambda),I(\mu))$, where $I(\mu)$ is the injective hull of $L(\mu)$. The result now follows by taking dimensions.
\end{proof}

\section{The reduced Jantzen Sum Formula}\label{s.JSF}
In this section we study the Jantzen Sum Formula for the symplectic group $\Sp_n$. We want to strengthen the results from \cite[Sect~3]{DT}.
Assume for the moment that $G$ is any reductive group. Jantzen has defined for every Weyl module $\Delta(\lambda)$ of $G$ a descending filtration $\Delta(\lambda)=\Delta(\lambda)^0\supseteq\Delta(\lambda)^1\supseteq\cdots$ such that $\Delta(\lambda)/\Delta(\lambda)^1\cong L(\lambda)$ and $\Delta(\lambda)^i=0$ for $i$ big enough. The Jantzen sum formula \cite[II.8.19]{Jan} relates the formal characters of the $\Delta(\lambda)^i$ with the Weyl characters $\chi(\mu)$, $\mu\in X^+$:
\begin{equation}\label{eq.JSF}
\sum_{i>0}{\rm ch}\,\Delta(\lambda)^i=\sum\nu_p(lp)\chi(s_{\alpha,l}\cdot\lambda)\ ,
\end{equation}
where the sum on the right is over all pairs $(\alpha,l)$, with $l$ an integer $\ge1$ and $\alpha$ a positive root such that $\la\lambda+\rho,\alpha^\vee\ra-lp>0$, and $\nu_p$ is the $p$-adic valuation. Here $\chi(\mu)=0$ if and only if $\la\mu+\rho,\alpha^\vee\ra=0$ for some $\alpha>0$, and if $\chi(\mu)\ne0$, then $\chi(\mu)=\det(w)\chi(w\cdot\mu)$, where $w\cdot\mu$ is dominant for a unique $w\in W$. See \cite[II.5.9(1)]{Jan}. We denote the RHS of \eqref{eq.JSF} by $JSF(\lambda)$.

Now return to our standard assumption $G=\Sp_n$. For $\lambda\in X$ we have that $\chi(\lambda)\ne0$ if and only if
\begin{align*}
&(\lambda+\rho)_i\ne0\text{\quad for all\ }i\in\{1,\ldots,m\}\text{\quad and}\\
&(\lambda+\rho)_i\ne\pm(\lambda+\rho)_j\text{\quad for all\ }i,j\in\{1,\ldots,m\}\text{\ with\ }i\ne j.
\end{align*}

For the remainder of this section $\lambda$ is a \emph{$p$-core}, unless stated otherwise. This means that for all $i\in\{1,\ldots,m\}$ and all integers $l\ge1$, $(\lambda+\rho)_i-lp$ must occur in $\lambda+\rho$, provided it is $>0$.\footnote{\hbox{This is equivalent to the definition in \cite[Ex~I.1.8]{Mac}, but note that we work with a different $\rho$.}}

\begin{lem}\label{lem.e_i-e_j}
Assume $\alpha=\ve_i-\ve_j$, $1\le i<j\le m$ and $\la\lambda+\rho,\alpha^\vee\ra=a+lp$, $a,l>0$. Then $\chi(s_{\alpha,l}\cdot\lambda)=0$.
\end{lem}
\begin{proof}
We have $(\lambda+\rho)_i-(\lambda+\rho)_j=\la\lambda+\rho,\alpha^\vee\ra=a+lp$, $a,l>0$.
Now $(\lambda+\rho)_i-lp=(\lambda+\rho)_j+a=s_{\alpha,l}(\lambda+\rho)_j$ must occur in $\lambda+\rho$ and it clearly can't occur in position $i$ or $j$, so $s_{\alpha,l}(\lambda+\rho)$ contains a repeat and $\chi(s_{\alpha,l}\cdot\lambda)=0$.
\end{proof}

\begin{lem}\label{lem.cancellation}
Let $\Phi_1$ be the set of roots $\alpha=\ve_i+\ve_j$, $1\le i<j\le m$ for which $(\lambda+\rho)_j-(\la\lambda+\rho,\alpha^\vee\ra-lp)<0$, and let $\Phi_2$ be the set of roots $2\ve_i$, $1\le i\le m$. Furthermore, let $S_1$ be the set of pairs $(\alpha,l)$ such that $\alpha\in\Phi_1$, $l$ an integer $\ge1$, $\la\lambda+\rho,\alpha^\vee\ra-lp>0$ and $\chi(s_{\alpha,l}\cdot\lambda)\ne0$, and let $S_2$ be the corresponding set for $\Phi_2$. Then there exists a map $\varphi:S_1\to\Phi_2$ such that:
\begin{enumerate}[{\rm(i)}]
\item $(\alpha,l)\mapsto(\varphi(\alpha,l),l)$ is a bijection from $S_1$ onto $S_2$.
\item $\chi(s_{\alpha,l}\cdot\lambda)=-\chi(s_{\varphi(\alpha,l),l}\cdot\lambda)$.
\end{enumerate}
Furthermore, if $\alpha=\ve_i+\ve_j$, $1\le i<j\le m$, and $l$ is an integer $\ge1$ such that $j>l(\lambda)$, $\la\lambda+\rho,\alpha^\vee\ra-lp>0$ and $\chi(s_{\alpha,l}\cdot\lambda)\ne0$, then $(\alpha,l)\in S_1$.
\end{lem}

\begin{proof}
Let $(\alpha,l)\in S_1$. Write $\alpha=\alpha^\vee=\ve_i+\ve_j$, $1\le i<j\le m$ and put $a=\la\lambda+\rho,\alpha^\vee\ra-lp$. We have $(\lambda+\rho)_i+(\lambda+\rho)_j=\la\lambda+\rho,\alpha^\vee\ra=a+lp$ and $s_{\alpha,l}(\lambda+\rho)=\lambda+\rho-a\alpha$. Since $(\lambda+\rho)_i-lp=a-(\lambda+\rho)_j>0$, this value must occur in a position $\ne i$ in $\lambda+\rho$. If this position were $\ne j$, then $s_{\alpha,l}(\lambda+\rho)$ would contain a repeat up to sign. So $a=2(\lambda+\rho)_j$.

Now put $\varphi(l,\alpha)=2\ve_i$. Note that $(2\ve_i)^\vee=\ve_i$. So $\la\lambda+\rho,\varphi(l,\alpha)^\vee\ra=(\lambda+\rho)_i=(\lambda+\rho)_j+lp$.
Furthermore, $s_{\alpha,l}(\lambda+\rho)$ is obtained from $s_{\varphi(l,\alpha),l}(\lambda+\rho)$ by changing the sign of the $j^{\rm th}$ coordinate. This proves (ii) and that $(\alpha,l)\mapsto(\varphi(\alpha,l),l)$ is an injection from $S_1$ to $S_2$.

Now let $(\beta,l)\in S_2$ and write $\beta=2\ve_i$. Then $(\lambda+\rho)_i-lp=\la\lambda+\rho,\beta^\vee\ra-lp>0$ and must occur in some position $j>i$ in $\lambda+\rho$. Put $\alpha=\ve_i+\ve_j$. Then $\la\lambda+\rho,\alpha^\vee\ra-lp=(\lambda+\rho)_j+(\lambda+\rho)_i-lp=2(\lambda+\rho)_j>0$, $2(\lambda+\rho)_j-(\lambda+\rho)_j>0$. Furthermore, $s_{\alpha,l}(\lambda+\rho)$ is obtained from $s_{\beta,l}(\lambda+\rho)$ by changing the sign of the $j^{\rm th}$ coordinate. So $(\alpha,l)\in S_1$ and $\varphi(\alpha,l)=\beta$. This proves (i).

We now prove the final assertion. Assume $\alpha,i,j,l$ are as stated and put $a=\la\lambda+\rho,\alpha^\vee\ra-lp$. Note that $\lambda_j=0$. If $\rho_j-a>0$, then it must occur in $\lambda+\rho$ after the $j$-th position, since $j>l(\lambda)$ and the last $m-l(\lambda)$ entries of $(\lambda+\rho)$ form an interval with smallest value $1$. So $s_{\alpha,l}(\lambda+\rho)$ would contain a repeat, which contradicts $\chi(s_{\alpha,l}\cdot\lambda)\ne0$. If $\rho_j-a=0$, then $s_{\alpha,l}(\lambda+\rho)$ contains $0$ which is also impossible. Therefore, $(\lambda+\rho)_j-a=\rho_j-a<0$.
\end{proof}

\begin{exa}
If $\lambda$ is not a $p$-core, then there may be surviving contributions coming from a root $2\ve_i$.
For example, when $p=5$, $m=2$ and $\lambda=(41)$, then $JSF(\lambda)=-\chi(1)+\chi(21)$, where the terms come from $\ve_1+\ve_2$ and $2\ve_1$, both with $l=1$. Since $JSF(21)=\chi(1)$, we get $JSF(\lambda)={\rm ch}\,L(21)$.

If $\lambda$ is a $p$-core, then a root $\ve_i+\ve_j$, $1\le i<j\le l(\lambda)$ can make a surviving contribution for one $l$ and a cancelling one for another $l$. For example, take $p=3$, $m=3$ and $\lambda=(21^2)$. Then $JSF(\lambda)=\chi(\emptyset)-\chi(\emptyset)+\chi(11)-\chi(0)$, where the terms come from the $(\alpha,l)$-pairs $(2\ve_1,1)$, $(\ve_1+\ve_2,2)$, $(\ve_1+\ve_3,2)$ and $(\ve_1+\ve_3,1)$, respectively. Here the first contribution of $\ve_1+\ve_3$ is surviving, since $(\lambda+\rho)_3-a=2-1>0$ and the second one cancelling, since $(\lambda+\rho)_3-a=2-4<0$.
\end{exa}

By the previous two lemmas we may, when $\lambda$ is a $p$-core, restrict the sum on the RHS of \eqref{eq.JSF} to pairs $(\alpha,l)$ with $\alpha=\ve_i+\ve_j$, $1\le i<j\le l(\lambda)$, $\la\lambda+\rho,\alpha^\vee\ra=a+lp$, $a,l\ge 1$ and $(\lambda+\rho)_j-a>0$ (and $\chi(s_{\alpha,l}\cdot\lambda)\ne0$). We will refer to this sum as the \emph{reduced sum} and to the whole equality as the \emph{reduced Jantzen Sum Formula}. For $\mu,\nu\in\mb Z^m$ we write $\mu\subseteq\nu$ when $\mu_i\le\nu_i$ for all $i\in\{1,\ldots,m\}$, and we denote the weakly decreasing permutation of $\mu$ by ${\rm sort}(\mu)$. The next lemma shows that, when working with Weyl characters, the nonzero terms in the reduced sum have distinct Weyl characters.

\begin{lem}\label{lem.inclusion}
Let $\alpha=\ve_i+\ve_j$, $1\le i<j\le l(\lambda)$, be a positive root with $\la\lambda+\rho,\alpha^\vee\ra=a+lp$, $a,l\ge 1$, $(\lambda+\rho)_j-a>0$ and $\chi(s_{\alpha,l}\cdot\lambda)\ne0$. Then the entries of $s_{\alpha,l}(\lambda+\rho)$ are distinct and strictly positive. Put differently, for some (or all) $t\in\{l(\lambda),\ldots,m\}$, the first $t$ entries of $s_{\alpha,l}(\lambda+\rho)$ are distinct and $>m-t$. %=$\rho_{t+1}$
Now put $\mu={\rm sort}(s_{\alpha,l}(\lambda+\rho))-\rho$. Then $\mu$ is a partition with $\mu\subsetneqq\lambda$ which is $W_p(D_{l(\lambda)})$-conjugate to $\lambda$ under the dot action. Furthermore, the map $(\alpha,l)\mapsto\mu$ is injective.
\end{lem}

\begin{proof}
Since $(\lambda+\rho)_i-a>(\lambda+\rho)_j-a>0$ it is clear that all entries of $s_{\alpha,l}(\lambda+\rho)$ are (distinct and) strictly positive.
We have $s_{\alpha,l}(\lambda+\rho)\subseteq \lambda+\rho$ and therefore $\mu\subseteq\lambda$, since $\lambda$ is weakly decreasing. All assertions, except the final one are now clear. The set of values in $s_{\alpha,l}(\lambda+\rho)$ is obtained by choosing two values in $\lambda+\rho$ and strictly lowering these to values which don't occur in any of the other $m-2$ positions in $\lambda+\rho$. If two values disappear (and two new ones are introduced), then it is obvious how to recover $i$, $j$, $a$ and $l$ from the value set of $s_{\alpha,l}(\lambda+\rho)$. If only one value $x$ disappears and one new value $y$ is introduced, then $x=(\lambda+\rho)_i$, $a=(x-y)/2$ and $(\lambda+\rho)_j=(x+y)/2$ and this gives us $i$, $j$ and $l$.
\end{proof}

\begin{exa}
In the reduced sum one $\alpha$ can contribute with more than one $l$-value. Furthermore, $\Delta(\lambda)$ may have composition factors $L(\mu)$ with $\mu\nsubseteq\lambda$.
For example, take $p=3$, $m=4$ and $\lambda=(642)$. Then $\lambda$ is a $p$-core and we have $JSF(1^2)=0$, $JSF(32^21)=0$, $JSF(42^2)=\chi(32^21)$, $JSF(4^2)=\chi(42^2)-\chi(32^21)={\rm ch}L(42^2)$,
$JSF(62)=\chi(1^2)+\chi(4^2)+\chi(42^2)={\rm ch}L(1^2)+{\rm ch}L(4^2)+2{\rm ch}L(42^2)+{\rm ch}L(32^21)$, and $JSF(642)=-\chi(1^2)+\chi(42^2)+\chi(4^2)+2\chi(62)={\rm ch}L(4^2)+2{\rm ch}L(42^2)+{\rm ch}L(32^21)+2\chi(62)-\chi(1^2)$. From the formula for $JSF(62)$ we know that $L(1^2)$ occurs in $\Delta(62)$, so $2\chi(62)-\chi(1^2)$ is the character of a $G$-module and $L(32^21)$ must occur in $\Delta(642)$.
Note that both $-\chi(1^2)$ and $\chi(42^2)$ in $JSF(642)$ come from the root $\ve_1+\ve_2$. Their $l$-values are $4$ and $5$, respectively.
\end{exa}

Note that $\lambda_1+l(\lambda)\le p$ implies that $\lambda$ is a $p$-core, since $\lambda_1+l(\lambda)-1$ is the greatest hook length. For the next result we need this stronger assumption.
Define the partial order $\preceq$ on $X^+$ as follows:

\medskip

\emph{$\mu\preceq\lambda$ if and only if there is a sequence of dominant weights $\lambda=\lambda_1,\ldots,\lambda_t=\mu$, $t\ge1$, such that for all $r\in\{1,\ldots,t-1\}$, $\lambda_{r+1}=ws_{\alpha,l}\cdot\lambda_r$ for some $w\in\Sym(\{1,\ldots,l(\lambda_r)\})$, $\alpha=\ve_i+\ve_j$, $1\le i<j\le l(\lambda_r)$, and $l\ge1$ with $\la\lambda_r+\rho,\alpha^\vee\ra-lp\ge1$, and all entries of $s_{\alpha,l}(\lambda_r+\rho)$ distinct and strictly positive.}

\medskip

Note that $\lambda_1+l(\lambda)\le p$ and $\mu\preceq\lambda$ implies that $\mu\subseteq\lambda$ and $\mu$ is $W_p(D_{l(\lambda)})$-conjugate to $\lambda$ under the dot action, which, in turn, implies that $\mu\le\lambda$. 
% Just use the characterisation in terms of partial sums of the coordinates.
For $s\in\{1,\ldots,m\}$ put
$$\Lambda_s=\{\mu\in X^+\,|\,\mu_1+l(\mu)\le p\text{\ and\ }l(\mu)\le s\}\,.$$

Assertion (i) below says that, when $\lambda_1+l(\lambda)\le p$, nonzero contributions of roots $\alpha=\ve_i+\ve_j$, $1\le i<j\le l(\lambda)$ are always surviving and have a unique $l$-value.

\begin{prop}\label{prop.linkage}
Let $\lambda\in\Lambda_m$, i.e. $\lambda\in X^+$ with $\lambda_1+l(\lambda)\le p$.
\begin{enumerate}[{\rm(i)}]
\item If $\alpha=\ve_i+\ve_j$, $1\le i<j\le l(\lambda)$, and $l,a$ are integers $\ge1$ such that $\la\lambda+\rho,\alpha^\vee\ra=a+lp$ and $\chi(s_{\alpha,l}\cdot\lambda)\ne0$, then $(\lambda+\rho)_j-a>0$ and $a<p-1$.
\item If $\Lambda\subseteq\Lambda_m$ is $\preceq$-saturated, then the algebra $O_\Lambda(k[G])^*$ is quasihereditary with partially ordered labelling set $(\Lambda,\preceq)$ and the Weyl and induced modules as standard and costandard modules. In particular, if $[\Delta(\lambda):L(\mu)]$ or $(T(\lambda):\nabla(\mu))$ is nonzero, then $\mu\preceq\lambda$.
\item If $\mu$ is maximal with respect to $\preceq$ amongst the dominant weights $\nu$ for which $\chi(\nu)$ occurs in the RHS of the reduced Jantzen Sum Formula associated to $\lambda$ or amongst the dominant weights $\prec\lambda$, then we have \break $\dim\Hom_G(\nabla(\lambda),\nabla(\mu))=[\Delta(\lambda):L(\mu)]\ne0$.
\end{enumerate}
\end{prop}

\begin{proof}
(i).\ Since $s_{\alpha,l}(\lambda+\rho)_j=(\lambda+\rho)_j-a$ we cannot have $(\lambda+\rho)_j-a=0$. Now assume $(\lambda+\rho)_j-a<0$. Then $0<a-(\lambda+\rho)_j=(\lambda+\rho)_i-lp\le\rho_i+\lambda_i-p\le m-l(\lambda)$, since $\lambda_i+l(\lambda)\le \lambda_1+l(\lambda)\le p$. In particular $l(\lambda)<m$ and the last $m-l(\lambda)$ entries of $\lambda+\rho$ form an interval with smallest value $1$. Now $a-(\lambda+\rho)_j$ must occur in this interval and 
$s_{\alpha,l}(\lambda+\rho)$ contains a repeat up to sign. Contradiction. So $(\lambda+\rho)_j-a>0$. If $a\ge p-1$, then $0<(\lambda+\rho)_j-a\le(\lambda+\rho)_j-p+1\le m-l(\lambda)$, since $\rho_j\le m-1$, and $s_{\alpha,l}(\lambda+\rho)$ would contain a repeat.\\
(ii).\ First we show that $[\Delta(\lambda):L(\mu)]\ne0$ implies $\mu\preceq\lambda$. If $\mu=\lambda$, the assertion is clear. Otherwise, $L(\mu)$ must be a composition factor of ${\rm rad}\,\Delta(\lambda)$ and therefore ${\rm ch}\,L(\mu)$ must occur in the reduced sum. So it must occur in some $\chi(\nu)$ with $\nu\preceq\lambda$, $\nu\ne\lambda$. Clearly, $\nu_1+l(\nu)\le p$. So the assertion follows by induction on $|\lambda|$. Now let $\Lambda$ be as stated. Let $\Lambda_0$ be a finite $\le$-saturated subset of $X^+$ containing $\Lambda$. Define the partial order $\sqsubseteq$ on $X^+$ by $$\mu\sqsubseteq\nu\Leftrightarrow\begin{cases}\mu\preceq\nu\text{\ if\ }\nu_1+l(\nu)\le p\,,\\ \mu\le\nu\text{\ if\ }\nu_1+l(\nu)>p\,.\end{cases}$$
The algebra $O_{\Lambda_0}(k[G])^*$ is quasihereditary with partially ordered labelling set $(\Lambda_0,\le)$ and the Weyl and induced modules as standard and costandard modules.
By Lemma~\ref{lem.quasihereditary} and what we just proved, this algebra is also quasihereditary for the partial order $\sqsubseteq$. Now we observe that $\sqsubseteq$ and $\preceq$ coincide on $\Lambda_m$ and we can truncate further to $O_\Lambda(k[G])^*$. Finally, assume $(T(\lambda):\nabla(\mu))\ne0$. Then we can take $\Lambda=\{\nu\in X^+\,|\,\nu\preceq\lambda\}$ above and we obtain that $\mu\in\Lambda$, i.e. $\mu\preceq\lambda$.\\
(iii).\ If $\mu$ is  maximal amongst the dominant weights $\prec\lambda$, then, by the definition of $\preceq$, (a nonzero multiple of) $\chi(\mu)$ must occur in the reduced sum associated to $\lambda$, say $\mu={\rm sort}(s_{\alpha,l}(\lambda+\rho))-\rho$. Now write the reduced sum as a linear combination of irreducible characters. By the maximality of $\mu$ the character ${\rm ch}\,L(\mu)$ only occurs in the term $\nu_p(lp)c\chi(\mu)$, $c=\pm1$. It follows that $c=1$ and $[\Delta(\lambda):L(\mu)]\ne0$.
Put $\Lambda=\{\nu\in X^+\,|\,\nu\preceq\lambda\}$. Applying Lemma~\ref{lem.quasihereditary2} to the quasihereditary algebra $O_\Lambda(k[G])^*$ we get $\dim\Hom_G(\nabla(\lambda),\nabla(\mu))=[\nabla(\lambda):L(\mu)]=[\Delta(\lambda):L(\mu)]\ne0$.
\end{proof}

\begin{exa}
In the reduced sum different $\alpha$'s may have different $l$-values when we assume $\lambda_1+l(\lambda)\le p$. For example, when $m=5$, $p=11$ and $\lambda=(7^261)$ we have $JSF(\lambda)=\chi(6^31)+\chi(7^25)$, where the contributions come from $\ve_1+\ve_2$ with $l=2$ and $\ve_3+\ve_4$ with $l=1$. Further calculation yields $JSF(6^31)=JSF(7^25)={\rm ch}\,L(6^25)$ and $JSF(\lambda)={\rm ch}\,L(6^31)+{\rm ch}\,L(7^25)+2{\rm ch}\,L(6^25)$. There can't be more than two $l$-values in the reduced sum when $\lambda_1+l(\lambda)\le p$, see Remark~\ref{rems.preceq}.2.
%I don't think you can get more than one $l$-value in the reduced sum when $p>|\lambda|$.
% For example, if you use 4 arrows on the first two and last two nodes and put the wall below the line right before the last two nodes, then the partition size is 2*((p+1)/2-3)+3*1+4*1=p+2. Here I used that the part corresponding to an arrow a is the number of labels without arrows between a and the wall, going anti-clockwise from the wall.
% You get the same estimate when you put the wall above the line right after the first two nodes.
% So I think that in the situation of Shalile's paper you cannot have a cap on both sides of the wall.
\end{exa}

\section{Translation Functors}\label{s.translation}
% Define Supp (mention Brauer's formula),
% Make the point somewhere that translation equivalences and projections do not always stay in the same alcove closure.
The results in this section are analogues of \cite[Thm~3.2,3.3, Prop~3.4]{CdV} and \cite[II.7.9, 7.14-16]{Jan}. Our results don't follow from the ones in \cite{Jan}, see Remark~\ref{rems.translation}.1. One could try to reformulate/generalise these results in terms of $W_p(D_s)$ and a type $D_s$ alcove geometry similar to \cite[Sect~5-7]{CdVM2}, % You have a special $\rho$ and there is also a type $A_{s-1}$ subsystem involved (parabolic setup)
but instead we will choose a ``combinatorial" approach similar to \cite{CdV}, using the notion of the ``support" of a partition. This suffices for our applications in Sections~\ref{s.filt_mult} and \ref{s.dec_num}. Compared to \cite{CdV} the notion of the support of a partition arises from an application of Brauer's formula \cite[II.5.8]{Jan} and the role of the induction and restriction functors in \cite{CdV} is in our setting played by the translation functors.

Recall that the tensor product of two modules with a good/Weyl filtration has a good/Weyl filtration, see \cite[II.4.21, 2.13]{Jan}. Let $\lambda\in X^+$. Then we have by Brauer's formula that $\chi(\lambda)\sum_{i=1}^m(e(\ve_i)+e(-\ve_i))=\sum_{\mu\in{\rm Supp}(\lambda)}\chi(\mu)$, where ${\rm Supp}(\lambda)$ consists of all partitions of length $\le m$ which can be obtained by adding a box to $\lambda$ or removing a box from $\lambda$. Here we used the rules for $\chi(\lambda)$ to be nonzero from Section~\ref{s.JSF}.
Since ${\rm ch}\,V=\sum_{i=1}^m(e(\ve_i)+e(-\ve_i))$, it follows that $\nabla(\lambda)\ot V$ has a good filtration with sections $\nabla(\mu)$, $\mu\in{\rm Supp}(\lambda)$ and $\Delta(\lambda)\ot V$ has a Weyl filtration with sections $\Delta(\mu)$, $\mu\in{\rm Supp}(\lambda)$.

First we recall the definition and basic properties of certain ordinary translation functors. For $\lambda\in X^+$ the projection functor ${\rm pr}_\lambda:\{G\text{-modules}\}$ $\to\{G\text{-modules}\}$ is defined by ${\rm pr}_\lambda M=O_{W_p\cdot\lambda\cap X^+}(M)$. Then $M=\bigoplus_\lambda{\rm pr}_\lambda M$ where the sum is over a set of representatives of the type $C_m$ linkage classes in $X^+$, see \cite[II.7.3]{Jan}.
Now let $\lambda,\lambda'\in X^+$ with $\lambda'\in{\rm Supp}(\lambda)$. Then we define the \emph{translation functor} $T_\lambda^{\lambda'}:\{G\text{-modules}\}\to\{G\text{-modules}\}$ by $T_\lambda^{\lambda'}M={\rm pr}_{\lambda'}(({\rm pr}_\lambda M)\ot V)$.
So this is just a special case of the translation functors from \cite[II.7.6]{Jan}, since $\ve_1$ is the dominant $W$-conjugate of $\lambda'-\lambda$ and $V=\nabla(\ve_1)=L(\ve_1)=V^*$.
In particular, $T_\lambda^{\lambda'}$ is exact and left and right adjoint to $T_{\lambda'}^\lambda$.
Note that, for $\mu\in X^+\cap  W_p\cdot\lambda$, $T_\lambda^{\lambda'}\nabla(\mu)$ has a good filtration with sections $\nabla(\nu)$, $\nu\in{\rm Supp}(\mu)\cap  W_p\cdot\lambda'$, and the analogue for Weyl modules and Weyl filtrations also holds.

We will actually work with certain refined translation functors which we define now. Recall the definition of the set $\Lambda_s$ from Section~\ref{s.JSF}.
If $\Lambda\subseteq\Lambda_s$ is a $\preceq$-saturated set, then, by Proposition~\ref{prop.linkage}(ii), the type $D_s$ linkage principle holds in $\mc C_\Lambda$. So if $\lambda,\mu\in\Lambda$ belong to the same $\mc C_\Lambda$-block, then they are conjugate under the dot action of $W_p(D_s)$.
For $\lambda\in\Lambda_s$ we define the projection functor $\widetilde{\rm pr}_\lambda:\mc C_{\Lambda_s}\to\mc C_{\Lambda_s}$ by $\widetilde{\rm pr}_\lambda M=O_{W_p(D_s)\cdot\lambda\cap X^+}(M)$. Then $M=\bigoplus_\lambda\widetilde{\rm pr}_\lambda M$ where the sum is over a set of representatives of the type $D_s$ linkage classes in $\Lambda_s$. Note that $\widetilde{\rm pr}_\lambda M$ is a direct summand of ${\rm pr}_\lambda M$.
Now let $\lambda,\lambda'\in\Lambda_s$ with $\lambda'\in{\rm Supp}(\lambda)$ and let $\mc C,\mc C'$ be Serre subcategories of $\mc C_{\Lambda_s}$ with ${\rm pr}_{\lambda'}((\widetilde{\rm pr}_\lambda M)\ot V)\in\mc C_{\Lambda_s}$ for all $M\in\mc C$ and ${\rm pr}_\lambda((\widetilde{\rm pr}_{\lambda'}M)\ot V)\in\mc C_{\Lambda_s}$ for all $M\in\mc C'$. Then we define the \emph{translation functors} $\widetilde T_\lambda^{\lambda'}:\mc C\to\mc C_{\Lambda_s}$ and $\widetilde T_{\lambda'}^\lambda:\mc C'\to\mc C_{\Lambda_s}$ by $\widetilde T_\lambda^{\lambda'}M=\widetilde{\rm pr}_{\lambda'}((\widetilde{\rm pr}_\lambda M)\ot V)$ and $\widetilde T_{\lambda'}^\lambda M=\widetilde{\rm pr}_\lambda((\widetilde{\rm pr}_{\lambda'} M)\ot V)$. Note that if $\mu\in X^+\cap  W_p(D_s)\cdot\lambda$ and $\nabla(\mu)\in\mc C$, then $\widetilde T_\lambda^{\lambda'}\nabla(\mu)$ has a good filtration with sections $\nabla(\nu)$, $\nu\in{\rm Supp}(\mu)\cap  W_p(D_s)\cdot\lambda'$. The analogue for Weyl modules and Weyl filtrations also holds.
If $\widetilde T_\lambda^{\lambda'}$ and $\widetilde T_{\lambda'}^\lambda$ have image in $\mc C$ and $\mc C'$, then they restrict to functors $\mc C\to\mc C'$ and $\mc C'\to \mc C$ which are exact and each others left and right adjoint.
% One could use the original adjoint pair $(F,G)=(T_\lambda^{\lambda'},T_{\lambda'}^\lambda)$ to define suitable categories to define the refined translation functors on. Here one uses that $FGF=F$ and $GFG=G$, see Chapter IV in Mac Lane's book. The domain category of $\widetilde T_\lambda^{\lambda'}$ is $\{M\in\mc C_{\Lambda_s}\,|\,F(M),GF(M)\in\mc C_{\Lambda_s}\}$ and the codomain category (domain category for $\widetilde T_{\lambda'}^\lambda$) is $\{M\in\mc C_{\Lambda_s}\,|\,G(M),FG(M)\in\mc C_{\Lambda_s}\}$.
\begin{prop}[Translation equivalence]\label{prop.trans_equivalence}
Let $\lambda,\lambda'\in\Lambda_s$ with $\lambda'\in{\rm Supp}(\lambda)$ and let $\Lambda\subseteq W_p(D_s)\cdot\lambda\cap\Lambda_s,\Lambda'\subseteq W_p(D_s)\cdot\lambda'\cap\Lambda_s$ be $\preceq$-saturated sets. % Normally, we will have $\lambda\in\Lambda$ and let $\lambda'\in\Lambda'$.
Assume
\begin{enumerate}[{\rm (1)}]
\item ${\rm Supp}(\nu)\cap W_p\cdot\lambda'\subseteq\Lambda_s$ for all $\nu\in\Lambda$, and ${\rm Supp}(\nu')\cap  W_p\cdot\lambda\subseteq\Lambda_s$ for all $\nu'\in\Lambda'$.
\item $|{\rm Supp}(\nu)\cap W_p(D_s)\cdot\lambda'|=1=|{\rm Supp}(\nu')\cap W_p(D_s)\cdot\lambda|$ for all $\nu\in\Lambda$ and $\nu'\in\Lambda'$.
\item The map $\nu\mapsto\nu':\Lambda\to\Lambda_s$ given by ${\rm Supp}(\nu)\cap W_p(D_s)\cdot\lambda'=\{\nu'\}$ has image $\Lambda'$, and together with its inverse $\Lambda'\to\Lambda$ it preserves the order $\preceq$.
\end{enumerate}
Then $\widetilde T_\lambda^{\lambda'}$ restricts to an equivalence of categories $\mc C_\Lambda\to\mc C_{\Lambda'}$ with inverse $\widetilde T_{\lambda'}^\lambda:\mc C_{\Lambda'}\to\mc C_\Lambda$.
Furthermore, with $\nu$ and $\nu'$ as in (3), we have $\widetilde T_\lambda^{\lambda'}\nabla(\nu)=\nabla(\nu')$, $\widetilde T_\lambda^{\lambda'}\Delta(\nu)=\Delta(\nu')$, $\widetilde T_\lambda^{\lambda'}L(\nu)=L(\nu')$, $\widetilde T_\lambda^{\lambda'}T(\nu)=T(\nu')$ and $\widetilde T_\lambda^{\lambda'}I_\Lambda(\nu)=I_{\Lambda'}(\nu')$.
\end{prop}

\begin{proof}
Note that $\widetilde{\rm pr}_\lambda=\id$ on $\mc C_\Lambda$ and $\widetilde{\rm pr}_{\lambda'}=\id$ on $\mc C_{\Lambda'}$.
From (1) we deduce that, for $M\in\mc C_{\Lambda}$, ${\rm pr}_{\lambda'}(M\ot V)\in\mc C_{\Lambda_s}$. So $\widetilde{\rm pr}_{\lambda'}(M\ot V)$ is well-defined and, by (2) and (3), it belongs to $C_{\Lambda'}$. The same holds with the roles of $\Lambda$ and $\Lambda'$ reversed, so we can apply the construction before the proposition.

The identities involving the induced and Weyl modules are now obvious. We have an exact sequence
\begin{align}\label{eq.Delta}0\to M\to\Delta(\nu)\to L(\nu)\to 0\,,\end{align}
where all composition factors $L(\eta)$ of $M$ satisfy $\eta\prec\nu$. Applying $\widetilde T_\lambda^{\lambda'}$ gives the exact sequence
\begin{align}\label{eq.T_Delta}0\to \widetilde T_\lambda^{\lambda'}M\to\Delta(\nu')\to \widetilde T_\lambda^{\lambda'}L(\nu)\to 0\,.\end{align}
Using the order preserving properties of $\nu\mapsto\nu'$ we see that for any $\theta\in\Lambda$ all composition factors $L(\eta')$ of $\widetilde T_\lambda^{\lambda'}L(\theta)$ satisfy $\eta'\preceq\theta'$. So all composition factors $L(\eta')$ of $\widetilde T_\lambda^{\lambda'}M$ satisfy $\eta'\prec\nu'$.
Therefore $\widetilde T_\lambda^{\lambda'}L(\nu)$ must have simple head $L(\nu')$ and all other composition factors $L(\eta')$ satisfy $\eta'\prec\nu'$. If $\widetilde T_\lambda^{\lambda'}L(\nu)\ne L(\nu')$, then $$\Hom_G(\Delta(\eta),L(\nu))=\Hom_G(\widetilde T_{\lambda'}^\lambda\Delta(\eta'),L(\nu))=\Hom_G(\Delta(\eta'),\widetilde T_\lambda^{\lambda'}L(\nu))\ne0$$
for some $\eta\ne\nu$. This is clearly impossible, so $\widetilde T_\lambda^{\lambda'}L(\nu)=L(\nu')$. We can prove the same for $\widetilde T_{\lambda'}^\lambda$, and then we can deduce as in the proof \cite[II.7.9]{Jan} that $\widetilde T_{\lambda'}^\lambda\widetilde T_\lambda^{\lambda'}\cong\id_{\mc C_\Lambda}$ and $\widetilde T_\lambda^{\lambda'}\widetilde T_{\lambda'}^\lambda\cong\id_{\mc C_{\Lambda'}}$. This implies the remaining assertions.
\end{proof}

\begin{prop}[Translation projection]\label{prop.trans_projection}
Let $\lambda,\lambda'\in\Lambda_s$ with $\lambda'\in{\rm Supp}(\lambda)$ and let $\Lambda\subseteq W_p(D_s)\cdot\lambda\cap\Lambda_s,\Lambda'\subseteq W_p(D_s)\cdot\lambda'\cap\Lambda_s$ be $\preceq$-saturated sets. Put $\tilde\Lambda=\{\nu\in\Lambda\,|\,{\rm Supp}(\nu)\cap W_p(D_s)\cdot\lambda'\ne\emptyset\}$. Assume
\begin{enumerate}[{\rm (1)}]
\item ${\rm Supp}(\nu)\cap W_p\cdot\lambda'\subseteq\Lambda_s$ for all $\nu\in\Lambda$, and ${\rm Supp}(\nu')\cap W_p\cdot\lambda\subseteq\Lambda_s$ for all $\nu'\in\Lambda'$.
\item $|{\rm Supp}(\nu)\cap W_p(D_s)\cdot\lambda'|=1$ for all $\nu\in\tilde\Lambda$, and $|{\rm Supp}(\nu')\cap W_p(D_s)\cdot\lambda|=2$ for all $\nu'\in\Lambda'$.
\item The map $\nu\mapsto\nu':\tilde\Lambda\to\Lambda_s$ given by ${\rm Supp}(\nu)\cap W_p(D_s)\cdot\lambda'=\{\nu'\}$ is a 2-to-1 map which has image $\Lambda'$ and preserves the order $\preceq$. For $\nu'\in\Lambda'$ we can write ${\rm Supp}(\nu')\cap W_p(D_s)\cdot\lambda=\{\nu^+,\nu^-\}$ with $\nu^-\prec\nu^+$ and then $\Hom_G(\nabla(\nu^+),\nabla(\nu^-))\ne0$ and $\eta'\preceq\nu'\Rightarrow\eta^+\preceq\nu^+$ and $\eta^-\preceq\nu^-$.
\end{enumerate}
Then $\widetilde T_\lambda^{\lambda'}$ restricts to a functor $\mc C_\Lambda\to\mc C_{\Lambda'}$ and $\widetilde T_{\lambda'}^\lambda$ restricts to a functor $\mc C_{\Lambda'}\to\mc C_\Lambda$.
Now let $\nu\in\Lambda$. If $\nu\notin\tilde\Lambda$, then $\widetilde T_\lambda^{\lambda'}\nabla(\nu)=\widetilde T_\lambda^{\lambda'}\Delta(\nu)=\widetilde T_\lambda^{\lambda'}L(\nu)=0$.
For $\nu'\in\Lambda'$ with $\nu^\pm$ as in (3), we have $\widetilde T_\lambda^{\lambda'}\nabla(\nu^\pm)=\nabla(\nu')$, $\widetilde T_\lambda^{\lambda'}\Delta(\nu^\pm)=\Delta(\nu')$, $\widetilde T_\lambda^{\lambda'}L(\nu^-)=L(\nu')$, $\widetilde T_\lambda^{\lambda'}L(\nu^+)=0$, $\widetilde T_{\lambda'}^\lambda T(\nu')=T(\nu^+)$ and $\widetilde T_{\lambda'}^\lambda I_{\Lambda'}(\nu')=I_\Lambda(\nu^-)$.
\end{prop}
% Remark: It is also easy to see that we have a nonsplit extension: $0\to\nabla(\nu^-)\to\widetilde T_{\lambda'}^\lambda\nabla(\nu')\to\nabla(\nu^+)\to0$: if it were split, then $\dim\Hom_G(\nabla(\nu^+),\widetilde T_{\lambda'}^\lambda\nabla(\nu'))>1$, but using the adjoint functor property it is clear that this dimension is $1$. If we now consider the long exact cohomology sequence associated to the above short exact sequence and the functor $\Hom_G(\nabla(\nu^+),-)$, and we also use the adjoint functor property (which holds for all $\Ext_G^i$), then we obtain $\dim\Hom_G(\nabla(\eta^+),\nabla(\eta^-))=\dim\Ext^1_G(\nabla(\eta^+),\nabla(\eta^-))=1$. See also \cite[II.2.14 and 4.13]{Jan}.
\begin{proof}
The first assertion follows as in the proof of Proposition~\ref{prop.trans_equivalence} and all identities involving the induced and Weyl modules are obvious. Moreover, it is also clear that $\widetilde T_\lambda^{\lambda'}L(\nu)=0$ when $\nu\notin\tilde\Lambda$, since $\widetilde T_\lambda^{\lambda'}\Delta(\nu)$ surjects onto $\widetilde T_\lambda^{\lambda'}L(\nu)$ and is $0$. If $\eta\prec\nu^-$, then $\eta'\prec\nu'$, 
% by the order preserving property we have $\eta'\preceq\nu'$ and equality would mean $\eta=\pm\nu$
so we obtain $\widetilde T_\lambda^{\lambda'}L(\nu^-)=L(\nu')$ as in the proof of Proposition~\ref{prop.trans_equivalence}.
Now consider \eqref{eq.Delta} and \eqref{eq.T_Delta} for $\nu=\nu^+$. Since $[\Delta(\nu^+):L(\nu^-)]=[\nabla(\nu^+):L(\nu^-)]\ne0$, we know that $L(\nu^-)$ occurs in $M$. So $\widetilde T_\lambda^{\lambda'}L(\nu^-)=L(\nu')$ occurs in $\widetilde T_\lambda^{\lambda'}M$ and therefore not in $\widetilde T_\lambda^{\lambda'}L(\nu^+)$.
If $\widetilde T_\lambda^{\lambda'}L(\nu^+)\ne0$, then it would have simple head $L(\nu')$ by \eqref{eq.T_Delta}. So $\widetilde T_\lambda^{\lambda'}L(\nu^+)=0$.
Note that ${\rm ch}\,\widetilde T_\lambda^{\lambda'}\widetilde T_{\lambda'}^\lambda M=2{\rm ch}\,M$ for any $M\in\mc C_{\Lambda'}$ which has a good or Weyl filtration. Now the equality $\widetilde T_{\lambda'}^\lambda T(\nu')=T(\nu^+)$ is proved as in \cite[E.11]{Jan}, replacing $\uparrow, w\cdot\lambda, ws\cdot\lambda, w\cdot\mu, T_\lambda^\mu$ and $T_\mu^\lambda$ by $\preceq, \nu^+, \nu^-, \mu, \widetilde T_\lambda^{\lambda'}$ and $\widetilde T_{\lambda'}^\lambda$. Finally,
\begin{align}\label{eq.I}\Hom_{\mc C_\Lambda}(-,\widetilde T_{\lambda'}^\lambda I_{\Lambda'}(\nu'))=\Hom_{\mc C_{\Lambda'}}(-,I_{\Lambda'}(\nu'))\circ \widetilde T_\lambda^{\lambda'}\end{align}
is exact, so $\widetilde T_{\lambda'}^\lambda I_{\Lambda'}(\nu')$ is injective in $\mc C_\Lambda$. Applying both sides of \eqref{eq.I} to $L(\eta)$, for $\eta\notin\tilde\Lambda$, for $\eta=\eta^+$ and for $\eta=\eta^-$, shows that $\widetilde T_{\lambda'}^\lambda I_{\Lambda'}(\nu')$ has simple socle $L(\nu^-)$ and therefore equals $I_\Lambda(\nu^-)$.
\end{proof}

\begin{rems}\label{rems.translation}
1. The translated weight $\lambda'$ need not be in the facet closure of $\lambda$. For example, when $p=5$, $m=7$, $s=2$ and $(\lambda,\lambda')=((2^2),(21))$ or $((21),(1^2))$, then it is easy to find affine reflection hyperplanes which contain $\lambda$, but not $\lambda'$. 
%We have $\rho=(7,6,5,4,3,2,1)$, so in the first case $\lambda+\rho=(9,8,5,4,3,2,1)$ and $\lambda'+\rho=(9,7,5,4,3,2,1)$ and we can consider an affine reflection hyperplane associated to $\ve_2+\ve_6$. In the second case $\lambda'+\rho=(8,7,5,4,3,2,1)$ and we can consider an affine reflection hyperplane associated to $\ve_1+\ve_7$.
However, we can, for $\Lambda=\{(2^2),\emptyset\}$ and $\Lambda'=\{(21),(1)\}$, apply Proposition~\ref{prop.trans_equivalence} in the first case, and, for $\Lambda=\{(21),(1)\}$ and $\Lambda'=\{(1^2)\}$, apply Proposition~\ref{prop.trans_projection} in the second case. We refer to Section~\ref{s.arrow_diagrams} for how to express this in terms of arrow diagrams.\\
2. As we will see later, the use of the type $D_s$ linkage principle is only needed for moves from the $0$-node. For most pairs $(\lambda,\lambda')$ we could just use the usual type $C_m$ linkage principle, i.e. the usual translation functors. So we use the refined translation functors to be able to deal with any move.
\end{rems}

\section{Arrow diagrams}\label{s.arrow_diagrams}
This section is based on the approaches of \cite{CdV} and \cite{Sh}. We use the ``characteristic $p$ wall" of \cite{Sh}, but at the same time we use the transposed labels (from the Brauer algebra point of view) as in \cite{CdV}. Recall the definition of $\rho$ from Section~\ref{s.prelim}.
An arrow diagram has $(p+1)/2$ \emph{nodes} on a (horizontal) \emph{line} with $p$ \emph{labels}: $0$ and $\pm i$, $i\in\{1,\ldots,(p-1)/2\}$. The $i$-th node from the left has top label $-(i-1)$ and a bottom label $i-1$. So the first node is the only node whose top and bottom label are the same. Next we choose $s\in\{1,\ldots,\min(m,p)\}$ and put a wall between $\rho_s$ and $\rho_s-1$ mod $p$. So when $\rho_s=(p+1)/2$ mod $p$ we can put the wall above or below the line, otherwise there is only one possibility.
Then we can also put in the \emph{values}, one for each label. A value and its corresponding label are always equal mod $p$. We start with $\rho_s$ immediately after the wall in the anti-clockwise direction, and then increasing in steps of $1$ going in the anti-clockwise direction around the line: $\rho_s,\rho_s+1,\ldots,\rho_s+p-1$.
For example, when $p=5$, $m=7$ and $s=2$, then $\rho_s=6$ and we have labels
$$
\resizebox{1.6cm}{.5cm}{\xy
% nodes on the line, the line and the arrows, and the wall
(0,0)="a1"*{\bullet};(0,4.5)*{0};(0,-4.5)*{0};(3.8,-3)*{\rule[0mm]{.3mm}{6mm}};
(7.5,0)="a2"*{\bullet};(7.5,4.5)*{-1};(7.5,-4.5)*{1};
(15,0)="a3"*{\bullet};(15,4.5)*{-2};(15,-4.5)*{2};
{"a1";"a3"**@{-}}; % the line
\endxy}
$$
(usually we omit the top labels), and values
$$
\resizebox{1.6cm}{.5cm}{\xy
% nodes on the line, the line and the arrows, and the wall
(0,0)="a1"*{\bullet};(0,4.5)*{10};(0,-4.5)*{10};(3.8,-3)*{\rule[0mm]{.3mm}{6mm}};
(7.5,0)="a2"*{\bullet};(7.5,4.5)*{9};(7.5,-4.5)*{6};
(15,0)="a3"*{\bullet};(15,4.5)*{8};(15,-4.5)*{7};
{"a1";"a3"**@{-}}; % the line
\endxy}\ .
$$
For a partition $\lambda$ with $l(\lambda)\le s\le p-\lambda_1$ we now form the ($s$-)\emph{arrow diagram} by putting in $s$ arrows ($\vee$ or $\land$) that point \emph{from} the values $(\rho+\lambda)_1,\ldots,(\rho+\lambda)_s$, or the corresponding labels. In case of the label $0$ we have two choices for the arrow. So in the above example the arrow diagram of $\lambda=(1^2)$ is
$$
\resizebox{1.6cm}{.5cm}{\xy
% nodes on the line, the line and the arrows, and the wall
(0,0)="a1"*{\bullet};(0,4.5)*{0};(0,-4.5)*{0};(3.8,-3)*{\rule[0mm]{.3mm}{6mm}};
(7.5,0)="a2"*{\bullet};(7.5,4.5)*{-1};(7.5,-4.5)*{1};
(15,0)="a3"*{\bullet};(15,1.5)*{\vee};(15,-1.5)*{\land};(15,4.5)*{-2};(15,-4.5)*{2};
{"a1";"a3"**@{-}}; % the line
\endxy}\ .
$$
In such a diagram we frequently omit the nodes and/or the labels. When it has already been made clear what the labels are and where the wall is, we can simply represent the arrow diagram by a string of single arrows ($\land$, $\vee$), opposite pairs of arrows ($\times$) and symbols ${\rm o}$ to indicate the absence of an arrow. In the above example $\lambda=(1^2)$ is then represented by ${\rm oo}\times$ and $\lambda=(32)$ is represented by $\vee{\rm o}\vee$ or $\land{\rm o}\vee$.

We can form the arrow diagram of $\lambda$ by first lining all $s$ arrows up against the wall and then moving them in the anticlockwise direction to the right positions. The arrow furthest from the wall (in the anti-clockwise direction) corresponds to $\lambda_1$, and the arrow closest to the wall corresponds to $\lambda_s$. The part corresponding to an arrow equals the number of labels without an arrow from that arrow to the wall in the clockwise direction. From the diagram you can see what you can do with the wall, changing $s$ but not $\lambda$: If there is an arrow immediately after the wall in the anti-clockwise direction, i.e. $l(\lambda)<s$, then you can move the wall one step in the anti-clockwise direction, removing the arrow that you move it past. If there is no arrow immediately after the wall in the clockwise direction, i.e. $\lambda_1<p-s$, then you can move the wall one step in the clockwise direction, putting an arrow at the label that you move it past, provided $s<m$.

More generally, we can for any $s\in\{1,\ldots,m\}$ and $\mu\in X^+$ with $l(\mu)\le s$, put $s$ arrows in the diagram pointing from the labels equal to $(\rho+\mu)_1,\ldots,(\rho+\mu)_s$ mod $p$, allowing repeated arrows at a label. Then $\mu$ and $\nu$ with $l(\mu),l(\nu)\le s$ are $W_p(C_s)$-conjugate under the dot action if and only if $|\mu|-|\nu|$ is even and the arrow diagram of $\nu$ can be obtained from that of $\mu$ by repeatedly replacing an arrow by its opposite, i.e. if and only if $|\mu|-|\nu|$ is even and the arrow diagrams of $\mu$ and $\nu$ have the same number of arrows at each node. Furthermore, $\mu$ and $\nu$ with $l(\mu),l(\nu)\le s$ are $W_p(D_s)$-conjugate under the dot action if and only if $|\mu|-|\nu|$ is even and the arrow diagram of $\nu$ can be obtained from that of $\mu$ by repeatedly replacing two arrows by their opposites, and possibly replacing an arrow with label $0$ by its opposite.

\begin{comment}
\ \\ PROOF\\
The implication $\Rightarrow$ is obvious. Now assume that the arrow diagram of $\nu$ can be obtained from $\mu$ in the way described.
Then $w(\mu+\rho)\equiv\nu+\rho$ mod $pX(C_s)$ for some $w\in W(C_s)$. Using that $pX(C_s)\cap X(C_s)_{ev}=pX(C_s)_{ev}$ ($p>2$), we get that $w(\mu+\rho)\equiv\nu+\rho$ mod $pX(C_s)_{ev}$ which means that $\mu$ and $\nu$ are $W_p(C_s)$-conjugate under the dot action. For the type $D_s$ statement we can replace $W(C_s)$ and $W_p(C_s)$ by $W(D_s)$ and $W_p(D_s)$ in these arguments.
\end{comment}

From now on $s\in\{1,\ldots,\min(m,p)\}$, unless stated otherwise. We put
$$\Lambda(s)=\{\lambda\in X^+\,|\,l(\lambda)\le s\le p-\lambda_1\}\,.$$
Unless stated otherwise, we assume $\lambda\in\Lambda(s)$.

When we speak of ``arrow pairs" it is understood that both arrows are single, i.e. neither of the two arrows is part of an $\times$. So, for example, at the node of the first arrow in an arrow pair $\vee\land$ there should not also be a $\land$. The arrows need not be consecutive in the diagram.

%Define the cap-curl diagram
We now define the \emph{cap-curl diagram} $c_\lambda$ of the arrow diagram associated to $\lambda$ as follows. All caps and curls are anti-clockwise, starting from the arrow closest to the wall. We start on the left side of the wall. We first form the caps recursively. Find an arrow pair $\vee\land$ that are neighbours in the sense that the only arrows in between are already connected with a cap or are part of an $\times$, and connect them with a cap. Repeat this until there are no more such arrow pairs.
Now the unconnected arrows that are not part of an $\times$ form a sequence $\land\cdots\land\vee\cdots\vee$. We connect consecutive (in the mentioned sequence) $\land\land$ pairs with a curl, starting from the left. At the end the unconnected arrows that are not part of an $\times$ form a sequence $\land\vee\cdots\vee$ or just a sequence of $\vee$'s. Note that none of these arrows occur inside a cap or curl.
The caps on the right side of the wall are formed in the same way. The curls now connect consecutive $\vee\vee$ pairs and are formed starting from the right. So at the end the unconnected arrows that are not part of an $\times$ form a sequence $\land\cdots\land\vee$ or just a sequence of $\land$'s. Again, none of these arrows occur inside a cap or curl.
For example, when $p=23$, $m=17$, $s=12$ and $\lambda=(11,11,11,11,11,11,10,6,4,4,1)$, then
% $\rho_{12}=6$, $\lambda+rho=(28,27,26,25,24,23,21,16,13,12,8,6,5,4,3,2,1)$ and
$c_\lambda$ is
$$\xy
% nodes on the line, the line and the arrows
(0,0)="a1";(0,1)*{\vee};
(5,0)="a2";(5,-1)*{\land};
(10,0)="a3";(10,1)*{\vee};(10,-1)*{\land};
(15,0)="a4";(15,-1)*{\land};
(20,0)="a5";(20,-1)*{\land};
(25,0)="a6";(25,-1)*{\land};(27.5,-3)*{\rule[0mm]{.3mm}{6mm}};
(30,0)="a7";(30,-1)*{\land};
(35,0)="a8";(35,1)*{\vee};
(40,0)="a9";(40,-1)*{\land};
(45,0)="a10";
(50,0)="a11";(50,1)*{\vee};
(55,0)="a12";(55,1)*{\vee};
{"a1";"a12"**@{-}}; % the line
% caps and curls
"a2";"a1"**\crv{(5,6)&(0,6)}; % cap
"a9";"a8"**\crv{(40,-6)&(35,-6)}; % cap
"a5";"a4"**\crv{(20,8)&(-5,9)&(-5,-5)&(15,-7)}; % curl
"a11";"a12"**\crv{(50,-3)&(59,-3)&(59,4)&(55,4)}; % curl
\endxy\ .$$
Note that the $10$-th node which has labels $\pm9$ and values $9$ and $14$, has no arrow.

\begin{lem}\label{lem.JSF-arrows}
Let $\lambda\in\Lambda(s)$.
\begin{enumerate}[{\rm(i)}]
\item The nonzero terms in the reduced Jantzen Sum Formula associated to $\lambda$ correspond in the arrow diagram of $\lambda$ to the arrow pairs $\vee\land$ to the left or to the right of the wall, and the arrow pairs $\land\land$ to the left of the wall, and the arrow pairs $\vee\vee$ to the right of the wall.
\item $\Delta(\lambda)$ is irreducible (equivalently, $T(\lambda)=\Delta(\lambda)$ or $\nabla(\lambda)$) if and only if there are no caps or curls in $c_\lambda$.
\item If $\mu$ is obtained from $\lambda$ by reversing the arrows in a pair as in (i) where the arrows are consecutive (no single arrows in between), and there are no single arrows to the left of a $\land\land$ or to the right of a $\vee\vee$, then we have $\dim\Hom_G(\nabla(\lambda),\nabla(\mu))=[\Delta(\lambda):L(\mu)]\ne0$.
\end{enumerate}
\end{lem}

\begin{proof}
(i). Write $\rho_s=x_s+up$ with $|x_s|\le(p-1)/2$ and $u\ge0$. If the wall is above the line ($x_s\le0$, $u\ge1$) the general form of a value is as indicated in the diagram below
$$
\begin{smallmatrix}
-x+up&\rule[-1mm]{.3mm}{3mm}&-x+(u+1)p\vspace{.7mm}\\
\hline\\
x+up& &x+up
\end{smallmatrix}
$$
If the wall is below the line ($x_s\ge0$) the general form of a value is as indicated in the diagram below
$$
\begin{smallmatrix}
-x+(u+1)p& &-x+(u+1)p\vspace{.7mm}\\
\hline\\
x+(u+1)p&\rule[-.7mm]{.3mm}{3mm}&x+up
\end{smallmatrix}
$$
Here $x$ always satisfies $0\le x\le(p-1)/2$. Note that the ``opposite" value on the other side of the line has the same $x$ in its general form.
Put differently, the label corresponding to the value is $x$ if the value is below the line and $-x$ if it is above the line.

Now let $\alpha=\ve_i+\ve_j$, $1\le i<j\le l(\lambda)$, and $l,a\ge1$ such that $\la\lambda+\rho,\alpha^\vee\ra=a+lp$ and $\chi(s_{\alpha,l}\cdot\lambda)\ne0$.
Put $c=(\lambda+\rho)_i$ and $d=(\lambda+\rho)_j$. Note that $c$ and $d$ cannot be opposite, because then we would have $a=0$. Assume the wall is below the line.
Then the 12 candidate configurations of $c$ and $d$ in the arrow diagram of $\lambda$ are:
$$
%
% same side of the line
%
\begin{smallmatrix}
c\ d& &\ \vspace{.7mm}\\ % above the line
\hline\\
\ &\rule{.3mm}{2mm}&
\end{smallmatrix}\,,
\begin{smallmatrix}
c& &d\vspace{.7mm}\\
\hline\\
\ &\rule{.3mm}{2mm}&
\end{smallmatrix}\,,
\begin{smallmatrix}
\ & &c\ d\vspace{.7mm}\\
\hline\\
\ &\rule{.3mm}{2mm}&
\end{smallmatrix}\,,
\begin{smallmatrix} % below the line
\ & &\ \vspace{.7mm}\\
\hline\\
d\ c&\rule{.3mm}{2mm}&
\end{smallmatrix}\,,
\begin{smallmatrix}
\ & &\ \vspace{.7mm}\\
\hline\\
c&\rule{.3mm}{2mm}&d
\end{smallmatrix}\,,
\begin{smallmatrix}
\ & &\ \vspace{.7mm}\\
\hline\\
\ &\rule{.3mm}{2mm}&d\ c
\end{smallmatrix}\,,
%
% different sides of the line
%
\begin{smallmatrix} % left of the wall
d\ \ & &\ \vspace{.7mm}\\
\hline\\
\ \ c&\rule[-.5mm]{.3mm}{2mm}&
\end{smallmatrix}\,,
\begin{smallmatrix}
\ \ d& &\ \vspace{.7mm}\\
\hline\\
c\ \ &\rule[-.5mm]{.3mm}{2mm}&
\end{smallmatrix}\,,
\begin{smallmatrix} % both sides of the wall
c& &\ \vspace{.7mm}\\
\hline\\
\ &\rule{.3mm}{2mm}&d
\end{smallmatrix}\,,
\begin{smallmatrix}
\ & &d\vspace{.7mm}\\
\hline\\
c&\rule[-.5mm]{.3mm}{2mm}&
\end{smallmatrix}\,,
\begin{smallmatrix} % right of the wall
\ & &c\ \ \vspace{.7mm}\\
\hline\\
\ &\rule{.3mm}{2mm}&\ \ d
\end{smallmatrix}\,,
\begin{smallmatrix}
\ & &\ \ c\vspace{.7mm}\\
\hline\\
\ &\rule{.3mm}{2mm}&d\ \ 
\end{smallmatrix}\,.
$$
Here it is understood that the opposite values of $c$ and $d$ are not present in the diagram of $\lambda+\rho$, since otherwise $s_{\alpha,l}(\lambda+\rho)$ would contain a repeat and $\chi(s_{\alpha,l}\cdot\lambda)$ would be $0$.
Now it is easy to see that the only possible configurations are 3,4,7 and 11:
$\begin{smallmatrix}
\ & &c\ d\vspace{.7mm}\\
\hline\\
\ &\rule{.3mm}{2mm}&
\end{smallmatrix}\,,
\begin{smallmatrix} % below the line
\ & &\ \vspace{.7mm}\\
\hline\\
d\ c&\rule{.3mm}{2mm}&
\end{smallmatrix}\,,
\begin{smallmatrix} % left of the wall
d\ \ & &\ \vspace{.7mm}\\
\hline\\
\ \ c&\rule[-.5mm]{.3mm}{2mm}&
\end{smallmatrix}\,,
\begin{smallmatrix} % right of the wall
\ & &c\ \ \vspace{.7mm}\\
\hline\\
\ &\rule{.3mm}{2mm}&\ \ d
\end{smallmatrix}\,,\vspace{.5mm}
$
which correspond precisely to the arrow pairs from the assertion. % Note that in configuration 3 we have $a=p-(x+y)<p-1$, since $x,y\ne0$.
For example, for configuration 1 we have $c=-x+(u+1)p, d=-y+(u+1)p$ with $0\le x<y\le(p-1)/2$. So $a=p-(x+y)$, $l=2u+1$, and $s_{\alpha,l}(\lambda+\rho)$ equals $y+up$ in position $i$ and $x+up$ in position $j$. However, the available values for the labels $x,y$ are $x+(u+1)p$ and $y+(u+1)p$. So this configuration is not possible.
Similarly, for configuration 8 we have $c=x+(u+1)p, d=-y+(u+1)p$ with $0\le x<y\le(p-1)/2$. So $a=p-(y-x)$, $l=2u+1$, and $s_{\alpha,l}(\lambda+\rho)$ equals $y+up$ in position $i$ and $-x+up$ in position $j$. However, the available values for the labels $-x,y$ are $-x+(u+1)p$ and $y+(u+1)p$. So this configuration is not possible.
As a final example, for configuration 9 we have $c=-x+(u+1)p, d=y+up$ with $0\le x<y\le(p-1)/2$. So $a=y-x$, $l=2u+1$, and $s_{\alpha,l}(\lambda+\rho)$ equals $-y+(u+1)p$ in position $i$ and $x+up$ in position $j$. However, the available values for the labels $x,-y$ are $x+(u+1)p$ and $-y+(u+1)p$. So this configuration is not possible.
The case when the wall is above the line is completely analogous.

Conversely, it is clear that if $(\alpha,l)$ corresponds to one of the stated pairs, then the entries of $s_{\alpha,l}(\lambda+\rho)$ are distinct and strictly positive, so $\chi(s_{\alpha,l}\cdot\lambda)\ne0$.\\
(ii). This follows easily from (i). For example, there is an arrow pair $\vee\land$ to the left of the wall if and only if there is a cap to the left of the wall in $c_\lambda$ (although there will in general be more such pairs than such caps).\\
(iii). Such a $\mu$ is maximal amongst the weights $\nu$ for which (a nonzero multiple of) $\chi(\nu)$ occurs on the RHS of the reduced Jantzen Sum Formula, so this follows from Proposition~\ref{prop.linkage}(iii).
\end{proof}

\begin{rems}\label{rems.preceq}
1.\ Let $s\in\{1,\ldots,\min(m,p)\}$ and let $\lambda\in\Lambda(s)$ and $\mu\in X^+$. Then it follows from the above lemma that $\mu\preceq\lambda$ if and only if $\mu\in\Lambda(s)$ and the arrow diagram of $\mu$ can be obtained from that of $\lambda$ by repeatedly replacing an arrow pair $\vee\land$ to the left or to the right of the wall, or an arrow pair $\land\land$ to the left of the wall, or an arrow pair $\vee\vee$ to the right of the wall, by the opposite arrow pair, and possibly replacing an arrow with label $0$ by its opposite.
% For this statement we didn't need to assume $|\lambda|-|\mu|$ even, since we know the values corresponding to the labels, and each of the stated arrow pair replacements corresponds to applying an affine reflection which preserves the parity of the coordinate sum.

Furthermore, $\lambda,\mu\in\Lambda(s)$ are conjugate under the dot action of $W_p(D_s)$ if and only if the arrow diagram of $\mu$ is obtained from that of $\lambda$ by replacing an even number of single arrows to the left of the wall and an even number of single arrows to the right of the wall by their opposites, and possibly replacing an arrow with label $0$ by its opposite.
Finally, $\lambda,\mu\in\Lambda(s)$ are conjugate under the dot action of $W_p$ if and only if the arrow diagram of $\mu$ is obtained from that of $\lambda$ by replacing a number of single arrows to the left of the wall and an even number of single arrows to the right of the wall by their opposites.
This follows from the fact that replacing an arrow on the left side of the wall by its opposite preserves the parity of the coordinate sum and replacing an arrow on the right side of the wall by its opposite changes the parity.\\
% For $W_p(D_s)$-conjugacy you need to replace an even number of arrows by their opposite, and, by the above, the number of arrows on the right side of the wall that get replaced by their opposite must be even.
2.\ The $l$-values corresponding to the configurations 3,4,7 and 11 from the proof are $2u+1,2u+2,2u+2,2u+1$.
The possible configurations when the wall is above the line are:
$\begin{smallmatrix}
\ &\rule[-.5mm]{.3mm}{2mm}&c\ d\vspace{.7mm}\\
\hline\\
\ & &
\end{smallmatrix}\,,
\begin{smallmatrix} % below the line
\ &\rule[-.5mm]{.3mm}{2mm}&\ \vspace{.7mm}\\
\hline\\
d\ c& &
\end{smallmatrix}\,,
\begin{smallmatrix} % left of the wall
d\ \ &\rule[-.5mm]{.3mm}{2mm}&\ \vspace{.7mm}\\
\hline\\
\ \ c& &
\end{smallmatrix}\,,
\begin{smallmatrix} % right of the wall
\ &\rule[-.5mm]{.3mm}{2mm}&c\ \ \vspace{.7mm}\\
\hline\\
\ & &\ \ d
\end{smallmatrix}\,,\vspace{.7mm}
$
with $l$-values $2u+1,2u,2u,2u+1$.
So in the reduced Jantzen Sum Formula associated to $\lambda$ we only have two possible $l$-values.

We don't know of any examples where $p$ satisfies the assumption $p>|\lambda|$ from \cite{Sh} with more than one $l$-value in the reduced sum.
In particular, we don't know of any examples of $\lambda$'s with $p>|\lambda|$ such that there is a cap or curl on the left of the wall and a cap or curl on the right of the wall in $c_\lambda$.
\end{rems}

\section{Weyl filtration multiplicities in tilting modules}\label{s.filt_mult}
Recall the definition of the set $\Lambda(s)$ from Section~\ref{s.arrow_diagrams}. Let $s\in\{1,\ldots,\min(m,p)\}$, let $\lambda\in\Lambda(s)$, and let $\mu\in X^+$ with $\mu\preceq\lambda$. Then the arrow diagram of $\mu$ has its single arrows and its $\times$'s at the same nodes as the arrow diagram of $\lambda$. If the arrow diagram of $\lambda$ has an arrow at $0$, then we assume that the parity of the number of $\land$'s in the arrow diagram of $\mu$ is the same as that for $\lambda$.\footnote{Then the parity of the number of $\vee$'s in the arrow diagram of $\mu$ is of course also the same as that for $\lambda$.} This only requires a possible change of an arrow at $0$ to its opposite in the arrow diagram of $\mu$. If there is no arrow at $0$, then these parities will automatically be the same, since $\mu$ is $W_p(D_{l(\lambda)})$-conjugate to $\lambda$ under the dot action.
Then we know, by Remark~\ref{rems.preceq}.1, that the arrow diagram of $\mu$ can be obtained from that of $\lambda$ by repeatedly replacing an arrow pair $\vee\land$ to the left or to the right of the wall, or an arrow pair $\land\land$ to the left of the wall, or an arrow pair $\vee\vee$ to the right of the wall, by the opposite arrow pair.

Recall the definition of the cap-curl diagram $c_\lambda$ from the previous section. We now define the \emph{cap-curl diagram} $c_{\lambda\mu}$ associated to $\lambda$ \emph{and $\mu$} by replacing each arrow in $c_\lambda$ by the arrow from the arrow diagram of $\mu$ at the same node. Put differently, we put the caps and curls from $c_\lambda$ on top of the arrow diagram of $\mu$. We say that $c_{\lambda\mu}$ is \emph{oriented} if all caps and curls in $c_{\lambda\mu}$ are oriented (clockwise or anti-clockwise). It is not hard to show that when $c_{\lambda\mu}$ is oriented, the arrow diagrams of $\lambda$ and $\mu$ are the same at the nodes which are not endpoints of a cap or a curl in $c_\lambda$.

For example, when $p=11$, $m=7$, $s=5$ and $\lambda=(6^332)$. Then $\rho_s=3$ and $c_\lambda$ is
%$\lambda+\rho=(13,12,11,7,5)$ (first five coordinates)
$$\xy
% nodes on the line, the line and the arrows
(0,0)="a1";(0,1)*{\vee};
(5,0)="a2";(5,-1)*{\land};
(10,0)="a3";(10,-1)*{\land};(12.5,-3)*{\rule[0mm]{.3mm}{6mm}};
(15,0)="a4";
(20,0)="a5";(20,1)*{\vee};
(25,0)="a6";(25,-1)*{\land};
{"a1";"a6"**@{-}}; % the line
% caps and curls
"a2";"a1"**\crv{(5,6)&(0,6)}; % cap
"a6";"a5"**\crv{(25,-6)&(20,-6)} % cap
\endxy\ .$$
The $\mu\in X^+$ with $\mu\prec\lambda$ are $(6^321), (65^232), (65^221), (5^2432), (5^2421), (4^332),$\\ $(4^321)$,
%%The $\mu+\rho$ (first five coordinates) are $(13,12,11,6,4), (13,11,10,7,5), (13,11,10,6,4), (12,11,9,7,5), (12,11,9,6,4), (11,10,9,7,5), (11,10,9,6,4)$.
with arrow diagrams
$\vee\land\land\,{\rm o}\land\vee,\land\vee\land\,{\rm o}\vee\land, \land\vee\land\,{\rm o}\land\vee,\land\land\vee\,{\rm o}\vee\land,$
$\land\land\vee\,{\rm o}\land\vee,\vee\vee\vee\,{\rm o}\vee\land, \vee\vee\vee\,{\rm o}\land\vee$.
Only for the first three $c_{\lambda\mu}$ is oriented.

\begin{thm}\label{thm.filt_mult}
Let $s\in\{1,\ldots,\min(m,p)\}$, $\lambda\in\Lambda(s)$ and $\mu\in X^+$. Then
$$(T(\lambda):\nabla(\mu))=(T(\lambda):\Delta(\mu))=
\begin{cases}
1\text{\ if $\mu\preceq\lambda$ and $c_{\lambda\mu}$ is oriented,}\\
0\text{\ otherwise.}
\end{cases}
$$
\end{thm}
\begin{proof}
By Proposition~\ref{prop.linkage}(ii) we may assume $\mu\preceq\lambda$. The strategy of the proof is similar to that of the proof in \cite[Sect~6]{CdV}. The proof is by induction on the number of caps and curls in $c_\lambda$. If there are no caps or curls in $c_\lambda$, then $c_{\lambda\mu}$ is oriented if and only if $\lambda=\mu$, so the result follows from Lemma~\ref{lem.JSF-arrows}(ii). Otherwise, we choose a cap or curl which has no cap or curl inside it. We will transform the cap or curl to a cap for which the end nodes are consecutive via a sequence of moves which preserve the orientedness of $c_{\lambda\mu}$ and the multiplicity $(T(\lambda):\Delta(\mu))$. In the case of a cap we move the end node closest to the wall one step towards the other end node. In the case of a curl we will move the end node furthest away from the wall to the end (left or right) and then turn it into a cap. In the proof below we will make use of two basic facts. Let $t\in\{1,\ldots,m\}$. Firstly, if $\nu\in X^+$ with $l(\nu)\le t$ and $\nu'\in{\rm Supp}(\nu)$ with $l(\nu')\le t$, then the $t$-arrow diagram of $\nu'$ is obtained from that of $\nu$ by moving one arrow in the $t$-arrow diagram of $\nu$ one step. Secondly, if $\nu\in X^+$ and $\nu'\in X^+\cap W_p\cdot\nu$ have length $\le t$, then the $t$-arrow diagrams of $\nu$ and $\nu'$ have the same number of arrows at each node. See also Remark~\ref{rems.preceq}.1 for a version for the $W_p(D_s)$-action.

First we prove a general property of the moves we will make. Let $\lambda\in\Lambda(s)$ and $\lambda'\in{\rm Supp}(\lambda)\cap\Lambda(s)$ such that the move $\lambda\mapsto\lambda'$ does not cross or pass the wall. Now let $\nu\in\Lambda(s)\cap W_p\cdot\lambda$ and $\nu'\in{\rm Supp}(\nu)\cap W_p\cdot\lambda'$. We show that $\nu'\in\Lambda(s)$. The move from the arrow diagram of $\nu$ to that of $\nu'$ goes between the same nodes as the move $\lambda\mapsto\lambda'$, or between the values at the last node if this was true for $\lambda\mapsto\lambda'$. Assume $l(\nu')=s+1$. Then $l(\nu)=s<m$ and there is no arrow immediately after the wall in the anti-clockwise direction. We temporarily move the wall one step in the clockwise direction creating a new arrow immediately after the new wall in the anti-clockwise direction.\footnote{At the label of the new arrow there may be one other arrow and there may be a cap or curl of $c_\lambda$ passing or crossing the new wall.} The move from the arrow diagram of $\nu$ to that of $\nu'$ would move this new arrow one step in the anti-clockwise direction and therefore cross the original wall. But then the move $\lambda\mapsto\lambda'$ would also cross or pass the original wall. 
% When $l(\nu')+\nu'_1=p+1$, there could be an $\eta$ with $l(\eta)+\eta_1\le p$ and the same arrow diagram as $\nu'$, but this doesn't affect the argument.
This is impossible, so $l(\nu')\le s$. If $\nu'_1=p-s+1$, then $\nu_1=p-s$ and the move $\nu\mapsto\nu'$ would pass or cross the wall. This would then also hold for the move $\lambda\mapsto\lambda'$ which is impossible. We conclude that $\nu'\in\Lambda(s)$.

First assume the cap or curl is to the left of the wall and assume it is a cap. If $\lambda=
\xy
(0,0)="a1";(0,0)*{\cdots};
(5,0)="a2";(5,1)*{\vee};
(10,0)="a3";(10,0)*{\cdots};
(15,0)="a4";(15,0)*{\bullet};
(20,0)="a5";(20,-1)*{\land};(20,-4)*{a};
(25,0)="a6";(25,0)*{\cdots};(25,-6.5)*{\ }; %to create space below
% caps and curls
"a5";"a2"**\crv{(20,6)&(5,8)}; % cap
\endxy\ ,$
$a>1$, we choose
$\lambda'=
\xy
(0,0)="a1";(0,0)*{\cdots};
(5,0)="a2";(5,1)*{\vee};
(10,0)="a3";(10,0)*{\cdots};
(15,0)="a4";(15,-1)*{\land};
(20,0)="a5";(20,0)*{\bullet};(20,-3)*{a};
(25,0)="a6";(25,0)*{\cdots};(25,-6.5)*{\ }; %to create space below
% caps and curls
"a4";"a2"**\crv{(15,6)&(5,7)}; % cap
\endxy\ ,$
and we put $\Lambda=\Lambda(s)\cap W_p(D_s)\cdot\lambda$ and $\Lambda'=\Lambda(s)\cap W_p(D_s)\cdot\lambda'$.
Let $\nu\in\Lambda$. Assume $\nu'\in{\rm Supp}(\nu)\cap W_p\cdot\lambda'$. Then we have seen that $\nu'\in\Lambda(s)$.
Moreover, the move $\nu\mapsto\nu'$ moves the arrow at the $a$-node to the $(a-1)$-node.
So the property $\nu'\in{\rm Supp}(\nu)\cap W_p\cdot\lambda'$ determines a map $\nu\mapsto\nu':\Lambda\to\Lambda(s)$ given by
\parbox[c][.9cm]{3.2cm}{$\begin{smallmatrix}
\cdots&{\rm o}&\land&\cdots&\mapsto&\cdots&\land&{\rm o}&\cdots\\
\cdots&{\rm o}&\vee&\cdots&\mapsto&\cdots&\vee&{\rm o}&\cdots\\
&&a&&&&&a&
\end{smallmatrix}
$}.
This map clearly preserves the order $\preceq$ and $W_p(D_s)$-conjugacy (under the dot action), so it has its image in $\Lambda'$.
Similarly, the property $\nu\in {\rm Supp}(\nu')\cap W_p\cdot\lambda$ determines a map $\nu'\mapsto\nu:\Lambda'\to\Lambda(s)$ given by reading the above rule in the opposite direction and this map preserves $\preceq$ and $W_p(D_s)$-conjugacy. So these maps are each others inverse and Proposition~\ref{prop.trans_equivalence} gives that $(T(\lambda):\Delta(\mu))=(T(\lambda'):\Delta(\mu'))$. Furthermore, since $\times$'s and empty nodes don't really play a role in the cap-curl diagram, it is obvious that $c_{\lambda'\mu'}$ is oriented if and only if $c_{\lambda\mu}$ is oriented.
When $\lambda=
\xy
(0,0)="a1";(0,0)*{\cdots};
(5,0)="a2";(5,1)*{\vee};
(10,0)="a3";(10,0)*{\cdots};
(15,0)="a4";(15,1)*{\vee};(15,-1)*{\land};
(20,0)="a5";(20,-1)*{\land};(20,-4)*{a};
(25,0)="a6";(25,0)*{\cdots};(25,-6)*{\ }; %to create space below
% caps and curls
"a5";"a2"**\crv{(20,6)&(5,8)}; % cap
\endxy\ ,$
we choose
$\lambda'=
\xy
(0,0)="a1";(0,0)*{\cdots};
(5,0)="a2";(5,1)*{\vee};
(10,0)="a3";(10,0)*{\cdots};
(15,0)="a4";(15,-1)*{\land};
(20,0)="a5";(20,1)*{\vee};(20,-1)*{\land};(20,-4)*{a};
(25,0)="a6";(25,0)*{\cdots}; (25,-6)*{\ }; %to create space below
% caps and curls
"a4";"a2"**\crv{(15,6)&(5,7)}; % cap
\endxy\ .$
We define $\Lambda$ and $\Lambda'$ as before and similar arguments as above give a bijection $\Lambda\to\Lambda'$ given by
\parbox[c][.9cm]{3.2cm}{$\begin{smallmatrix}
\cdots&\times&\land&\cdots&\mapsto&\cdots&\land&\times&\cdots\\
\cdots&\times&\vee&\cdots&\mapsto&\cdots&\vee&\times&\cdots\\
&&a&&&&&a&
\end{smallmatrix}
$}
with the same properties as before.
In this case we move a unique arrow from the $(a-1)$-node to the $a$-node to go from $\nu$ to $\nu'$, although we think of the move as the arrow at the $a$-node moving past the $\times$.
So in this case Proposition~\ref{prop.trans_equivalence} again gives that $(T(\lambda):\Delta(\mu))=(T(\lambda'):\Delta(\mu'))$. Furthermore, we again have that $c_{\lambda'\mu'}$ is oriented if and only if $c_{\lambda\mu}$ is oriented.

In case of a curl to the left of the wall, we move the left end node repeatedly one step to the left until it is the $0$-node. These moves are completely the same as the two types above (i.e. past an empty node or past an $\times$), only the inverse $\nu'\mapsto\nu$ of the final move is slightly different. The point is that a $\vee$ at the $0$-node could be replaced by a $\land$ without changing $\nu'$ and then be moved to the $1$-node. However, the condition that $\nu\in{\rm Supp}(\nu')$ should be $W_p(D_s)$-conjugate to $\lambda$ under the dot action singles out precisely one of the two options. So the property $\nu\in{\rm Supp}(\nu')\cap W_p(D_s)\cdot\lambda$ determines a map $\nu'\mapsto\nu:\Lambda'\to\Lambda(s)$ given by
\parbox[c][.9cm]{2.6cm}{$\begin{smallmatrix}
{\rm o}&\land&\cdots&\mapsto&\land&{\rm o}&\cdots\\
{\rm o}&\vee&\cdots&\mapsto&\vee&{\rm o}&\cdots
\end{smallmatrix}
$,}% There can be no $\times$ at the $0$-node
where we assume that the parity of the number of $\land$'s in the arrow diagram of $\nu'$ is the same as that in the arrow diagram of $\lambda'$ (which has a $\land$ at the $0$-node).
% Otherwise the map wouldn't be well-defined. Note that the number of $\land$'s in the arrow diagram of $\nu$ is the same that in the arrow diagram of $\nu'$
This map preserves $\preceq$ and $W_p(D_s)$-conjugacy and has therefore image in $\Lambda$.
Finally, we replace in the arrow diagrams of $\lambda'$ and $\mu'$ the arrow at the $0$-node by its opposite. This turns the curl into a cap, doesn't change the orientedness of $c_{\lambda'\mu'}$, and $\lambda'$ and $\mu'$ stay the same. So after the final move and swap operation we again have by Proposition~\ref{prop.trans_equivalence} that $(T(\lambda):\Delta(\mu))=(T(\lambda'):\Delta(\mu'))$.
% In terms of the arrow diagrams the swap operation preserves $\preceq$ and $W_p(D_s)$-conjugacy. For example, when $\nu$ is obtained from $\lambda$ by replacing an arrow pair $\vee\land$ resp. $\land\land$ to the left of the wall with the first arrow at the $0$-node by the opposite arrow pair, then $\nu'$ is obtained from $\lambda'$ by replacing an arrow pair $\land\land$ resp. $\vee\land$ by the opposite arrow pair. In terms of the weights the swap operation is the identity.

The case that the cap or curl is to the right of the wall is completely analogous. We now move the left end node of a cap to the right, and the right end node of a curl to the last node. 
% The final move can be past an $\times$ on the last node.
There is no move where the type $D_s$ linkage principle is needed, and instead of the final swap to turn a curl into a cap we have a move around the right corner given by
\parbox[c][.9cm]{2.3cm}{$\begin{smallmatrix}
\cdots&\vee&\mapsto&\cdots&\land\\
\cdots&\land&\mapsto&\cdots&\vee
\end{smallmatrix}
$.}
In more detail, we define $\Lambda$ and $\Lambda'$ as before. Then $\nu\in\Lambda$ and $\nu'\in{\rm Supp}(\nu)\cap W_p\cdot\lambda'$ implies that  $\nu$ and $\nu'$ have the same number of arrows at each node, since this is true for $\lambda$ and $\lambda'$. So $\nu'$ is obtained from $\nu$ by changing the arrow at the last node to its opposite and this determines a map $\nu\mapsto\nu':\Lambda\to\Lambda(s)$ given by the above rule. 
% Note that replacing an arrow to the right of the wall by its opposite doesn't give a $W_p$-conjugate weight, since the parity changes.
Finally, it is easy to check that the above map preserves $\preceq$. For example, when $\nu$ is obtained from $\lambda$ by replacing an arrow pair $\vee\land$ resp. $\vee\vee$ to the right of the wall with the second arrow at the last node by the opposite arrow pair, then $\nu'$ is obtained from $\lambda'$ by replacing an arrow pair $\vee\vee$ resp. $\vee\land$ by the opposite arrow pair.
Clearly, this map preserves $W_p(D_s)$-conjugacy, so it has image in $\Lambda'$. Similarly, the property $\nu\in {\rm Supp}(\nu')\cap W_p\cdot\lambda$ determines a map $\nu'\mapsto\nu:\Lambda'\to\Lambda(s)$ given by the same rule and this map preserves $\preceq$ and $W_p(D_s)$-conjugacy and therefore has image in $\Lambda$.
So also for this move $c_{\lambda'\mu'}$ is oriented if and only if $c_{\lambda\mu}$ is oriented and Proposition~\ref{prop.trans_equivalence} gives that $(T(\lambda):\Delta(\mu))=(T(\lambda'):\Delta(\mu'))$.

Now we are reduced to the case of a cap for which the end nodes are consecutive. So $\lambda=
\xy
(0,0)="a1";(0,0)*{\cdots};
(5,0)="a2";(5,1)*{\vee};
(10,0)="a3";(10,-1)*{\land};(10,-4)*{a};
(15,0)="a4";(15,0)*{\cdots};
% caps and curls
"a3";"a2"**\crv{(10,5)&(5,5)}; % cap
\endxy\ ,$
$a>0$, when the cap is to the left of the wall and $\lambda=
\xy
(0,0)="a1";(0,0)*{\cdots};
(5,0)="a2";(5,1)*{\vee};
(10,0)="a3";(10,-1)*{\land};(10,-4)*{\,a};
(15,0)="a4";(15,0)*{\cdots};(15,-7)*{\ }; %to create space below
% caps and curls
"a3";"a2"**\crv{(10,-5)&(5,-5)}; % cap
\endxy\ ,$ when the cap is to the right of the wall. Now we choose
$\lambda'=
\xy
(0,0)="a1";(0,0)*{\cdots};
(5,0)="a3";(5,0)*{\bullet};
(10,0)="a3";(10,1)*{\vee};(10,-1)*{\land};(10,-4)*{a};
(15,0)="a4";(15,0)*{\cdots};(15,-7)*{\ }; %to create space below
\endxy\ .$
% When $a>1$ we could also have chosen \lambda' with the $\times$ at the $(a-1)$-node and the $a$-node empty.
Define $\Lambda$ and $\Lambda'$ as before. First assume $a>1$ and let $\nu\in\Lambda$ and $\nu'\in {\rm Supp}(\nu)\cap W_p\cdot\lambda'$.
Then $\nu'\in\Lambda(s)$ as we have seen, and $\nu'$ is obtained from $\nu$ by moving the arrow at the $(a-1)$-node to the $a$-node.
Furthermore, this move can only be done when the arrows at the $(a-1)$-node and $a$-node are not both $\vee$ or both $\land$, i.e. when a cap connecting the two nodes is oriented.
Let us denote the set of $\nu\in\Lambda$ with this property by $\tilde\Lambda$. Then we obtain a map $\nu\mapsto\nu':\tilde\Lambda\to\Lambda(s)$ given by
\parbox[c][.9cm]{3.55cm}{$\begin{smallmatrix}
\cdots&\land&\vee&\cdots\\
\cdots&\vee&\land&\cdots
\end{smallmatrix}
\mapsto
\begin{smallmatrix}
\cdots&{\rm o}&\times&\cdots&\\
\end{smallmatrix}
$}
and it not hard to see that this map preserves $\preceq$ and $W_p(D_s)$-conjugacy and therefore has its image in $\Lambda'$.\footnote{For the preservation of $\preceq$ one can use functions like the $l_i(\lambda,\mu)$ in \cite[Sect~8]{CdV} and \cite[Sect~5]{BS}.} 
% It is enough to show that if $\eta$ is obtained from $\nu$ by repeatedly replacing an arrow pair $\vee\land$ or $\land\land$ to the left of the wall by the opposite pair, then $\eta'\preceq\nu'$.
% Now you can take $l_i(\nu)$ to be the number of $\land$'s amongst the first $i$ single arrows to the left of the wall. Then $\eta$ can be obtained from $\nu$ in the above way if and only if
% $\eta$ and $\nu$ are $W(D_s)$-conjugate and $l_i(\nu)\ge l_i(\eta)$ for all $i$. It is easy to relate the $l_i(\nu)$ to the $l_i(\nu')$.

When $a=1$ we may have that $\nu'\in {\rm Supp}(\nu)\cap W_p\cdot\lambda'$ is not $W_p(D_s)$-conjugate to $\lambda'$. For example, we could have $\lambda=\vee\land\land\cdots$, $\lambda'={\rm o}\times\land\cdots$, $\nu=\land\land\vee\cdots=\vee\land\vee\cdots$, which all agree from the fourth node on. Then ${\rm Supp}(\nu)\cap W_p\cdot\lambda'=\{\nu'\}$, where $\nu'={\rm o}\times\vee\cdots$ is not $W_p(D_s)$-conjugate to $\lambda'$.
So in this case we define the map $\nu\mapsto\nu'$ by the property $\nu'\in {\rm Supp}(\nu)\cap W_p(D_s)\cdot\lambda'$, where $\nu\in\tilde\Lambda$, the set of all $\nu$ for which this intersection is nonempty. This map is then given by the same rule as above, where we assume that the parity of the number of $\land$'s in the arrow diagram of $\nu$ is the same as that in the arrow diagram of $\lambda$.

Now let $\nu'\in\Lambda'$ and $\nu\in{\rm Supp}(\nu')\cap W_p\cdot\lambda$. Then $\nu\in\Lambda(s)$ by the general fact at the start of the proof, and we see that $\nu=\nu^\pm\in\tilde\Lambda$, where $\nu^+$ resp. $\nu^-$ is obtained from $\nu'$ by moving the $\vee$ resp. $\land$ at the $a$-node to the $(a-1)$-node. So the above map has image equal to $\Lambda'$. Furthermore, it is easy to see that $\eta\preceq\nu$ implies $\eta^-\preceq\nu^-$ and $\eta^+\preceq\nu^+$. By Lemma~\ref{lem.JSF-arrows}(iii) we have that $\Hom_G(\nabla(\nu^+),\nabla(\nu^-))\ne0$.
Since $\lambda=\lambda^+$ we have by Proposition~\ref{prop.trans_projection} that $$(T(\lambda):\Delta(\mu))=(\widetilde T_{\lambda'}^\lambda T(\lambda'):\Delta(\mu))=(T(\lambda'):\widetilde T_\lambda^{\lambda'}\Delta(\mu))=(T(\lambda'):\Delta(\mu'))\,,$$ when $\mu=\mu^\pm$ for some $\mu'\in\Lambda'$, i.e. $\mu\in\tilde\Lambda$, and $0$ otherwise.
Here we used that for any finite dimensional $G$-module $M$ with a Weyl filtration $(M:\Delta(\mu))=\dim\Hom_G(M,\nabla(\mu))$.
Finally, $c_{\lambda\mu}$ is oriented if and only if our cap is oriented in $c_{\lambda\mu}$ and $c_{\lambda'\mu'}$ is oriented. So we can now finish by applying the induction hypothesis,
since $c_{\lambda'}$ has one cap or curl less than the original $c_\lambda$.
\end{proof}

Let $r$ be an integer $\ge1$. For any $\delta\in k$ one has the Brauer algebra $B_r(\delta)$; see e.g. \cite{Br}, \cite{Brown}, \cite{DorHanWal} or \cite{Wen} for a definition.
It has a family of {\it cell modules} $\mc S(\lambda)$, labelled by the partitions of $r,r-2,\ldots,$ up to $0$ or $1$. If $\lambda$ is $p$-regular and $\lambda\ne\emptyset$ when $\delta=0$ and $r$ even, then ${\mc S}(\lambda)$ has a simple head which we denote by ${\mc D}(\lambda)$. We will assume that $\delta$ is in the prime field, since otherwise $B_r(\delta)$ is isomorphic to a direct sum of matrix algebras over symmetric group algebras, see \cite[Prop~1.2]{DT}. We denote the transpose of a partition $\lambda$ by $\lambda^T$.

\begin{cornn}
Let $\lambda,\mu$ be partitions with $r-|\lambda|,r-|\mu|\ge0$ and even and assume that $\lambda\ne\emptyset$ if $r$ is even and $\delta=0$. Assume also that $\lambda_1+l(\lambda)\le p$. Choose $m\ge r$ such that $-2m=\delta$ mod $p$ and $s\in\{1,\ldots,\min(m,p)\}$ with $\lambda\in\Lambda(s)$.
Then
$$[\mc S(\mu^T):\mc D(\lambda^T)]=
\begin{cases}
1\text{\ if $\mu\preceq\lambda$ and $c_{\lambda\mu}$ is oriented,}\\
0\text{\ otherwise.}
\end{cases}
$$
\end{cornn}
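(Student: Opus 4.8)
The plan is to deduce the corollary from Theorem~\ref{thm.filt_mult} together with the characteristic-$p$ connection between the Brauer algebra and the symplectic group established in \cite{DT} via the symplectic Schur functor. Write $n=2m$. Since $m\ge r$ and both $-2m$ and $\delta$ lie in the prime field $\mb F_p\subseteq k$, the congruence $-2m\equiv\delta\pmod p$ gives $-2m=\delta$ in $k$, so $B_r(\delta)=B_r(-n)$, which for $m\ge r$ is realised as $\End_{\Sp_n}(V^{\ot r})$. In this range the symplectic Schur functor is available and, by the results of \cite{DT}, for partitions in the relevant range it identifies the Brauer decomposition number $[\mc S(\mu^T):\mc D(\lambda^T)]$ with the Weyl (equivalently good) filtration multiplicity $(T(\lambda):\nabla(\mu))=(T(\lambda):\Delta(\mu))$ of $\nabla(\mu)$ in the indecomposable tilting $\Sp_n$-module $T(\lambda)$. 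Granting this, Theorem~\ref{thm.filt_mult} evaluates the right-hand side and the corollary follows.

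The first step is therefore to verify that the hypotheses of the corollary place us in the situation where both inputs apply. Because $m\ge r\ge|\lambda|,|\mu|$ we have $l(\lambda)\le|\lambda|\le m$ and $l(\mu)\le|\mu|\le m$, so $\lambda$ and $\mu$ are dominant weights for $\Sp_{2m}$; since $r-|\lambda|$ and $r-|\mu|$ are non-negative and even, $T(\lambda)$ is a summand of $V^{\ot r}$ and $\nabla(\mu)$ occurs in a good filtration of $V^{\ot r}$, which is what the symplectic Schur functor requires. Next, $\lambda_1+l(\lambda)\le p$ forces $\lambda_k-\lambda_{k+1}\le\lambda_1\le p-1$ for all $k$, i.e. $\lambda$ is $p$-restricted; equivalently $\lambda^T$ is $p$-regular, so $\mc D(\lambda^T)$ is defined, and as $\lambda^T=\emptyset$ exactly when $\lambda=\emptyset$, the hypothesis ``$\lambda\ne\emptyset$ if $r$ even and $\delta=0$'' is precisely what is needed in the remaining degenerate case. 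Finally $\lambda\in\Lambda(s)$ for the chosen $s\in\{1,\ldots,\min(m,p)\}$, which is the standing hypothesis of Theorem~\ref{thm.filt_mult}.

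With these in place, \cite{DT} gives $[\mc S(\mu^T):\mc D(\lambda^T)]=(T(\lambda):\nabla(\mu))$, and Theorem~\ref{thm.filt_mult} yields
\[
[\mc S(\mu^T):\mc D(\lambda^T)]=(T(\lambda):\nabla(\mu))=\begin{cases}1&\text{if }\mu\preceq\lambda\text{ and }c_{\lambda\mu}\text{ is oriented},\\0&\text{otherwise}.\end{cases}
\]
If $\mu\not\preceq\lambda$ there is nothing more to say: both sides are $0$ (the tilting side by Proposition~\ref{prop.linkage}(ii)); and if $\mu\preceq\lambda$ then $\mu\subseteq\lambda$, so $l(\mu)\le l(\lambda)\le s$ and $\mu_1\le\lambda_1\le p-s$, i.e. $\mu\in\Lambda(s)$, so $c_{\lambda\mu}$ is defined and the statement makes sense.

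The substantive content is entirely in \cite{DT}: the passage in characteristic $p$ from the double-centraliser picture on $V^{\ot r}$ to an actual identification of Brauer decomposition numbers with tilting filtration multiplicities (through the symplectic Schur functor and the Ringel-type reciprocity on the Brauer side that converts the image of a tilting module into the projective cover of the matching simple). I expect the only real care needed in the present proof to be the hypothesis-matching just described --- in particular the implications ``$\lambda_1+l(\lambda)\le p$'' $\Rightarrow$ ``$\lambda$ is $p$-restricted'' $\Leftrightarrow$ ``$\lambda^T$ is $p$-regular'', and keeping track of which partition is transposed in each appeal --- after which the corollary is immediate from Theorem~\ref{thm.filt_mult}.
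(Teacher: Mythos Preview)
Your overall strategy is exactly the paper's: reduce to Theorem~\ref{thm.filt_mult} via the symplectic Schur functor from \cite{DT}. However, you are over-attributing to \cite{DT} and thereby skipping a step that is not just bookkeeping. What \cite[Prop~2.1(iii)]{DT} actually yields is
\[
(T(\lambda):\Delta(\mu))=[\widetilde{\mc S}(\mu):\widetilde{\mc D}(\lambda)],
\]
where $\widetilde{\mc S}$ and $\widetilde{\mc D}$ are the \emph{sign-twisted} cell and simple modules. Getting from these to $[\mc S(\mu^T):\mc D(\lambda^T)]$ requires two further facts: (a) $\widetilde{\mc S}(\nu)$ and $\mc S(\nu^T)$ have the same composition multiplicities (since $k_{\rm sg}\otimes S(\nu)\cong S(\nu^T)^*$ and the inflation functor $Z_u\otimes_{k\Sym_t}-$ is exact), and (b) $\widetilde{\mc D}(\lambda)=\mc D(\varphi(\lambda))$ with $\varphi$ the Mullineux involution, together with $\varphi(\lambda)=\lambda^T$.

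Point (b) is where your hypothesis-check falls short. You verify only that $\lambda$ is $p$-restricted, equivalently that $\lambda^T$ is $p$-regular, which ensures $\mc D(\lambda^T)$ exists. But $\varphi(\lambda)=\lambda^T$ is \emph{not} automatic for $p$-restricted $\lambda$; it holds here because $\lambda_1+l(\lambda)\le p$ forces $\lambda$ to be a $p$-\emph{core}, so $S(\lambda)$ is irreducible and $k_{\rm sg}\otimes D(\lambda)=k_{\rm sg}\otimes S(\lambda)\cong S(\lambda^T)^*\cong D(\lambda^T)$. Without this, the transpose in the statement would not match what comes out of \cite{DT}. Once you insert this Mullineux/$p$-core argument, your proof coincides with the paper's.
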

\begin{proof}
Let $\Lambda$ be the set of partitions $\nu$ with $r-|\nu|$ even and $\ge0$. To any such $\nu$ we associate $t=|\nu|$ and write $r-t=2u$. Denote the Specht module for the symmetric group $\Sym_t$ associated to $\nu$ by $S(\nu)$, and, for $\nu$ $p$-regular, denote the irreducible by $D(\nu)$. Define the twisted cell and irreducible modules $\widetilde{\mc S}(\nu)$ and $\widetilde{\mc D}(\nu)$ as in \cite[Sect~1.2]{DT}. Note that in \cite{DT} the transpose of a partition $\nu$ is denoted by $\nu'$. Since $k_{\rm sg}\ot S(\nu)=S(\nu^T)^*$ and $S(\nu^T)$ have the same composition factors with multiplicities, and the functor $Z_u\ot_{k\Sym_t}-$ is exact,
we must have that $\widetilde{\mc S}(\nu)$ and ${\mc S}(\nu^T)$ have the same composition factors with multiplicities.
Furthermore, $\widetilde{\mc D}(\nu)={\mc D}(\varphi(\nu))$, for $\nu\in\Lambda$ $p$-regular and $\ne\emptyset$ when $\delta=0$ and $r$ even, where $\varphi$ is the Mullineux involution defined by $k_{\rm sg}\ot D(\lambda)=D(\varphi(\lambda))$.
\begin{comment}
{\ \\ \tt COMMENT\\}
For $\lambda$ $p$-regular and $\ne\emptyset$ when $\delta=0$ and $r$ even, the $B_r(\delta)$-module ${\mc S}(\lambda)$ surjects onto $Z_s\ot_{k\Sym_t}D(\lambda)$ which must therefore surject onto ${\mc D}(\lambda)$ (${\mc S}(\lambda)$ has simple head ${\mc D}(\lambda)$).
So $\widetilde{\mc S}(\lambda)$ which surjects onto \hbox{$Z_s\ot_{k\Sym_t}k_{\rm sg}\ot D(\lambda)=Z_s\ot_{k\Sym_t}D(\varphi(\lambda))$}
must surject onto ${\mc D}(\varphi(\lambda))$, where $\varphi$ is the Mullineux bijection (for that partition size).
It follows that $\widetilde{\mc D}(\lambda)={\mc D}(\varphi(\lambda))$ for $\lambda$ $p$-regular and $\ne\emptyset$ when $\delta=0$ and $r$ even.
\end{comment}
It follows that $[\widetilde{\mc S}(\nu):\widetilde{\mc D}(\eta)]=[{\mc S}(\nu^T):{\mc D}(\varphi(\eta))]$ for $\nu,\eta\in\Lambda$ with $\eta$ $p$-regular and $\ne\emptyset$ when $\delta=0$ and $r$ even. Now $l(\lambda)+\lambda_1\le p$ implies that $\lambda$ is a $p$-core, which, in turn, implies that $\lambda$ and $\lambda^T$ are $p$-regular. 
% if $\lambda$ is a $p$-core, then so is $\lambda^T$. It's easy to see that $\lambda$ not $p$-regular implies that $\lambda$ is not a $p$-core.
So $S(\lambda)$ is irreducible and $\varphi(\lambda)=\lambda^T$.

Using the symplectic Schur functor we have, by \cite[Prop~2.1(iii)]{DT} and the above,
$(T(\lambda):\Delta(\mu))=[\widetilde{\mc S}(\mu):\widetilde{\mc D}(\lambda)]=[{\mc S}(\mu^T):{\mc D}(\lambda^T)]$. So the result follows from the previous theorem.
\end{proof}

\section{Decomposition numbers}\label{s.dec_num}
Let $\mu\in\Lambda_m$. Choose $s\in\{1,\ldots,\min(m,p)\}$ with $\mu\in\Lambda(s)$.
First we define the \emph{cap-curl codiagram} $co_\mu$ of the arrow diagram associated to $\mu\in X^+$ as follows. All caps and curls are clockwise, starting from the arrow closest to the wall.
% Of course this is a bit arbitrary. For codiagrams we could let them start from the arrow furthest from the wall.
We start on the left side of the wall. We first form the caps recursively. Find an arrow pair $\land\vee$ that are neighbours in the sense that the only arrows in between are already connected with a cap or are part of an $\times$, and connect them with a cap. Repeat this until there are no more such arrow pairs.
Now the unconnected arrows that are not part of an $\times$ form a sequence $\vee\cdots\vee\land\cdots\land$. We connect consecutive (in the mentioned sequence) $\vee\vee$ pairs with a curl, starting from the left. At the end the unconnected arrows that are not part of an $\times$ form a sequence $\vee\land\cdots\land$ or just a sequence of $\land$'s.
The caps on the right side of the wall are formed in the same way. The curls now connect consecutive $\land\land$ pairs and are formed starting from the right. So at the end the unconnected arrows that are not part of an $\times$ form a sequence $\vee\cdots\vee\land$ or just a sequence of $\vee$'s. Note that none of these arrows occur inside a cap or curl. For example, when $p=23$, $m=17$, $s=12$ and $\mu=(87^763^21)$, then
%$\mu+rho=(25,23,22,21,20,19,18,17,15,11,10,7,5,4,3,2,1)$ and
$co_\mu$ is
$$\xy
% nodes on the line, the line and the arrows
(0,0)="a1";(0,-1)*{\land};
(5,0)="a2";(5,1)*{\vee};
(10,0)="a3";(10,1)*{\vee};(10,-1)*{\land};
(15,0)="a4";(15,1)*{\vee};
(20,0)="a5";(20,1)*{\vee};
(25,0)="a6";(25,1)*{\vee};(27.5,-3)*{\rule[0mm]{.3mm}{6mm}};
(30,0)="a7";(30,1)*{\vee};
(35,0)="a8";(35,-1)*{\land};
(40,0)="a9";(40,1)*{\vee};
(45,0)="a10";
(50,0)="a11";(50,-1)*{\land};
(55,0)="a12";(55,-1)*{\land};
{"a1";"a12"**@{-}}; % the line
% caps and curls
"a2";"a1"**\crv{(5,-6)&(0,-6)}; % cap
"a9";"a8"**\crv{(40,6)&(35,6)}; % cap
"a5";"a4"**\crv{(20,-8)&(-5,-9)&(-5,7)&(15,7)}; % curl
"a11";"a12"**\crv{(50,4)&(59,4)&(59,-4)&(55,-4)}; % curl
\endxy\ .$$

Let $\lambda\in\Lambda_m$ with $\mu\preceq\lambda$. If necessary, we change $s$ (and the arrow diagram of $\mu$, and $co_\mu$) to make sure that $\lambda\in\Lambda(s)$. Then the arrow diagram of $\lambda$ has its single arrows and its $\times$'s at the same nodes as the arrow diagram of $\mu$. If the arrow diagram of $\mu$ has an arrow at $0$, then we assume that the parity of the number of $\land$'s in the arrow diagram of $\lambda$ is the same as that for $\mu$. This only requires a possible change of an arrow at $0$ to its opposite in the arrow diagram of $\lambda$. If there is no arrow at $0$, then these parities will automatically be the same, since $\lambda$ is $W_p(D_{l(\lambda)})$-conjugate to $\mu$ under the dot action.
Then we know, by Remark~\ref{rems.preceq}.1, that the arrow diagram of $\lambda$ can be obtained from that of $\mu$ by repeatedly replacing an arrow pair $\land\vee$ to the left or to the right of the wall, or an arrow pair $\vee\vee$ to the left of the wall, or an arrow pair $\land\land$ to the right of the wall, by the opposite arrow pair.
Now we define the \emph{cap-curl codiagram} $co_{\mu\lambda}$ associated to $\mu$ \emph{and $\lambda$} by replacing each arrow in $co_\mu$ by the arrow from the arrow diagram of $\lambda$ at the same node. Put differently, we put the caps and curls from $co_\mu$ on top of the arrow diagram of $\lambda$. We say that $co_{\mu\lambda}$ is \emph{oriented} if all caps and curls in $co_{\mu\lambda}$ are oriented (clockwise or anti-clockwise). It is not hard to show that when $co_{\mu\lambda}$ is oriented, the arrow diagrams of $\mu$ and $\lambda$ are the same at the nodes which are not endpoints of a cap or a curl in $co_\mu$.
% The proof is completely analogous to the case of $c_{\lambda\mu}$, you simply have to interchange the roles of $\lambda$ and $\mu$ (So you now make the moves to go from the arrow diagram of $\mu$ to the arrow diagram of $\lambda$) and the roles of $\land$ and $\vee$.

For example, when $p=11$, $m=7$, $s=5$ and $\mu=(4^321)$. Then $\rho_s=3$ and $co_\mu$ is
%$\mu+\rho=(11,10,9,6,4)$ (first five coordinates)
$$\xy
% nodes on the line, the line and the arrows
(0,0)="a1";(0,1)*{\vee};
(5,0)="a2";(5,1)*{\vee};
(10,0)="a3";(10,1)*{\vee};(12.5,-3)*{\rule[0mm]{.3mm}{6mm}};
(15,0)="a4";
(20,0)="a5";(20,-1)*{\land};
(25,0)="a6";(25,1)*{\vee};
{"a1";"a6"**@{-}}; % the line
% caps and curls
"a2";"a1"**\crv{(5,-4)&(-4,-5)&(-4,5)&(0,5)}; % curl
"a6";"a5"**\crv{(25,6)&(20,6)} % cap
\endxy\ .$$
Consider two dominant weights $\lambda$ with $\mu\preceq\lambda$: $(6^332)$ and $(5^2432)$
%%The $\mu+\rho$ (first five coordinates) are $(13,12,11,7,5), (12,11,9,7,5)$.
with arrow diagrams $\vee\land\land{\rm o}\vee\land$ and $\land\land\vee{\rm o}\vee\land$.
Only for the last $co_{\mu\lambda}$ is oriented. Note that for the first we are not allowed to change the $\vee$ at the $0$-node to a $\land$,
because then the parity of the number of $\land$'s in the arrow diagram of $\lambda$ would not be the same as that in the arrow diagram of $\mu$.

\begin{thm}\label{thm.dec_num}
Let $s\in\{1,\ldots,\min(m,p)\}$, $\lambda\in\Lambda(s)$ and $\mu\in X^+$. Then
$$[\nabla(\lambda):L(\mu)]=[\Delta(\lambda):L(\mu)]=
\begin{cases}
1\text{\ if $\mu\preceq\lambda$ and $co_{\mu\lambda}$ is oriented,}\\
0\text{\ otherwise.}
\end{cases}
$$
\end{thm}
\begin{proof}
The proof is by induction on the number of caps and curls in $co_\mu$ and is completely analogous to the proof of Theorem~\ref{thm.filt_mult}.
The role of $\lambda$ is now played by $\mu$. We leave the details to the reader. The final argument involving the projection is as follows.
By Lemma~\ref{lem.JSF-arrows}(iii) we have that $\Hom_G(\nabla(\nu^+),\nabla(\nu^-))\ne0$.
Since $\mu=\mu^-$ we have by Proposition~\ref{prop.trans_projection} that
\begin{align*}[\Delta(\lambda):L(\mu)]&=(I_\Lambda(\mu):\nabla(\lambda))=(\widetilde T_{\mu'}^\mu I_{\Lambda'}(\mu'):\nabla(\lambda))=(I_{\Lambda'}(\mu'):\widetilde T_\mu^{\mu'}\nabla(\lambda))\\
&=(I_{\Lambda'}(\mu'):\nabla(\lambda'))=[\Delta(\lambda'):L(\mu')]\,,
\end{align*}
when $\lambda=\lambda^\pm$ for some $\lambda'\in\Lambda'$, i.e. $\lambda\in\tilde\Lambda$, and $0$ otherwise.
\end{proof}

Define the involution $\dagger$ on $\Lambda(s)$ by letting $\lambda^\dagger$ be the partition whose arrow diagram is obtained from that of $\lambda$ by replacing all single arrows by their opposite. Note that $\dagger$ reverses the order $\preceq$.
\begin{cornn}
Let $\lambda,\mu\in\Lambda(s)$. Then $[\Delta(\lambda):L(\mu)]=(T(\mu^\dagger):\nabla(\lambda^\dagger))$.
\end{cornn}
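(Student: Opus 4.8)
The plan is to deduce this purely formally from Theorems~\ref{thm.filt_mult} and \ref{thm.dec_num} together with the fact, recorded just before the corollary, that $\dagger$ reverses the order $\preceq$. By those two theorems both $[\Delta(\lambda):L(\mu)]$ and $(T(\mu^\dagger):\nabla(\lambda^\dagger))$ are $0$ or $1$; more precisely, $[\Delta(\lambda):L(\mu)]=1$ iff $\mu\preceq\lambda$ and $co_{\mu\lambda}$ is oriented, while $(T(\mu^\dagger):\nabla(\lambda^\dagger))=1$ iff $\lambda^\dagger\preceq\mu^\dagger$ and $c_{\mu^\dagger\lambda^\dagger}$ is oriented (here one uses that $\lambda^\dagger,\mu^\dagger\in\Lambda(s)$, since flipping single arrows leaves an $s$-arrow diagram an $s$-arrow diagram, so that both theorems apply). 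As $\mu\preceq\lambda\iff\lambda^\dagger\preceq\mu^\dagger$, the corollary reduces to the combinatorial claim: \emph{for $\mu\preceq\lambda$ in $\Lambda(s)$, the cap-curl codiagram $co_{\mu\lambda}$ is oriented if and only if the cap-curl diagram $c_{\mu^\dagger\lambda^\dagger}$ is oriented.}

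To prove this claim I would first note that the $s$-arrow diagram of $\nu^\dagger$ is obtained from that of $\nu$ simply by interchanging every single $\vee$ with $\land$, the $\times$'s and empty nodes being fixed. I would then check that under this interchange the recipe for $co_\mu$ in Section~\ref{s.dec_num} turns, clause by clause, into the recipe for $c_{\mu^\dagger}$ in Section~\ref{s.arrow_diagrams}: caps from $\land\vee$-pairs become caps from $\vee\land$-pairs, left (resp.\ right) curls from $\vee\vee$-pairs (resp.\ $\land\land$-pairs) become left (resp.\ right) curls from $\land\land$-pairs (resp.\ $\vee\vee$-pairs), and the recursive cap rule and the ``from the left/right'' curl rules are literally unchanged. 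Consequently $co_\mu$ and $c_{\mu^\dagger}$ have caps at the same pairs of nodes and curls at the same pairs of nodes, so, overlaying these arcs on the arrow diagram of $\lambda$ respectively of $\lambda^\dagger$, the diagram $c_{\mu^\dagger\lambda^\dagger}$ is literally $co_{\mu\lambda}$ with every single arrow interchanged. The last point to settle is that orientedness survives this interchange: every endpoint of a cap or curl carries a single arrow, a cap is oriented exactly when its two endpoints carry opposite arrows, and the orientedness of a curl depends only on the unordered pair of arrows at its two endpoints; since the conventions for ``oriented'' in Sections~\ref{s.arrow_diagrams} and \ref{s.dec_num} differ exactly by the swap $\vee\leftrightarrow\land$ (matching the opposite drawing directions of the arcs in the two constructions), flipping both endpoints of each arc preserves orientedness. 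This yields the claim, and hence the corollary.

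I expect the only genuine work to be the middle step: the careful, case-by-case verification that the codiagram construction is the ``$\vee\leftrightarrow\land$ mirror image'' of the diagram construction, including the behaviour at the $0$-node (where one must respect the parity convention fixed when $co_\mu$ and $c_\lambda$ were defined) and the left-of-wall/right-of-wall asymmetry in how curls are formed. Everything else is a formal manipulation of the statements of Theorems~\ref{thm.filt_mult} and \ref{thm.dec_num}.
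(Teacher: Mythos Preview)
Your proposal is correct and follows essentially the same approach as the paper's proof: both deduce the corollary from Theorems~\ref{thm.filt_mult} and \ref{thm.dec_num} together with the order-reversal of $\dagger$, reducing to the combinatorial observation that $co_{\mu\lambda}$ is obtained from $c_{\mu^\dagger\lambda^\dagger}$ by flipping all single arrows (and, as the paper adds explicitly, reflecting the arcs in the horizontal axis), which preserves orientedness. Your write-up is more detailed than the paper's one-line justification, but the content is the same.
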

\begin{proof}
This follows from Theorems~\ref{thm.filt_mult} and \ref{thm.dec_num}, since $co_{\mu\lambda}$ is obtained form $c_{\mu^\dagger\lambda^\dagger}$ by replacing all single arrows by their opposite and reflecting all caps and curls in the horizontal axis.
\end{proof}
\begin{rem}
In view of \cite[Lem A4.6]{D} and the above corollary it is natural to conjecture that, for $\Lambda$ the intersection of $\Lambda(s)$ with a $W_p(D_s)$-orbit under the dot action, the algebra $(O_{\Lambda^\dagger}(k[G])^*,\preceq)$ is the Ringel dual of $(O_\Lambda(k[G])^*,\preceq)$.
\end{rem}

\noindent {\bf Statements and Declarations}. The authors declare that they have no conflict of interest.

\end{document}